\documentclass[a4paper,11pt]{amsart}
\usepackage{amsmath,amssymb,amsfonts}
\usepackage{epsfig}
\usepackage{mathrsfs}
\newtheorem{defn}{Definition}[section]
\newtheorem{thm}{Theorem}[section]
\newtheorem{prop}{Proposition}[section]
\newtheorem{cor}{Corollary}[section]
\newtheorem{rmk}{Remark}[section]
\newtheorem{lma}{Lemma}[section]

\def\N{{\rm I\kern-0.16em N}}
\def\R{{\rm I\kern-0.16em R}}
\def\E{{\rm I\kern-0.16em E}}
\def\P{{\rm I\kern-0.16em P}}
\def\F{{\rm I\kern-0.16em F}}
\def\B{{\rm I\kern-0.16em B}}
\def\C{{\rm I\kern-0.46em C}}
\def\G{{\rm I\kern-0.50em G}}
\newcommand{\ud}{\mathrm{d}}
\numberwithin{equation}{section}
\font\eka=cmex10
\usepackage{ae}
\def\ind{\mathrel{\hbox{\rlap{%
\hbox to 7.5pt{\hrulefill}}\raise6.6pt\hbox{\eka\char'167}}}}
\parindent0pt
\begin{document}
\title[Rate of convergence]
{Rate of convergence for discretization of integrals with respect to Fractional Brownian motion}
\author[Azmoodeh and Viitasaari]{Ehsan Azmoodeh \and Lauri Viitasaari}
\address{Department of Mathematics and System Analysis, Helsinki University of Technology\\
P.O. Box 1100, FIN-02015 TKK,  FINLAND} 

\begin{abstract}
In this article, an uniform discretization of stochastic integrals $\int_{0}^{1} f'_-(B_t)\ud B_t$, with respect to fractional Brownian motion with Hurst parameter 
$H \in (\frac{1}{2},1)$, for a large class of convex functions $f$ is considered. In $\big[$\cite{a-m-v}, Statistics \& Decisions, \textbf{27}, 129-143$\big]$, for any 
convex function $f$, the almost sure convergence of uniform discretization to such stochastic integral is proved. Here we prove $L^r$- convergence of uniform 
discretization to stochastic integral. In addition, we obtain a rate of convergence. It turns out that the rate of convergence can be brought as closely as possible to $H - \frac{1}{2}$.

\medskip

\noindent
{\it Keywords:} fractional Brownian motion, stochastic integral, discretization, rate of convergence

\smallskip

\noindent
{\it 2010 AMS subject classification:} 60G22, 60H05, 41A25  
\end{abstract}

\maketitle

\section{Introduction}

It is well-known that fractional Brownian motion $B=\{ B_t\}_{t \in [0,1]}$, with Hurst parameter $H \neq \frac{1}{2}$ is neither a semimartingale nor a Markov process.
Therefore, according to \textit{Bichteler-Dellacherie} theorem, the classical Ito stochastic integration theory cannot be used to define a stochastic integral with 
respect to fractional Brownian motion. In last decade, many authors studied different possible ways to define stochastic integrals with respect to fractional Brownian 
motion. Essentially two different types of integrals can be defined:
\begin{itemize}
 \item The pathwise Riemann-Stieltjes integral $\int_{0}^{1} u_t \ud B_t$ exists if the integrand stochastic process $u = \{ u_t \}_{t \in [0,1]}$ has H\"older 
continuous sample paths of order $\alpha > 1-H$, as a result of Young integration theory \cite{y}. Z\"ahle \cite{z} extended this integral using the fractional integration by part 
formula, to some stochastic processes having some fractional smoothness.
\item The Skorokhod integral $($ or divergence integral $)$with respect to fractional Brownian motion. This integral is defined as adjoint operator of the Malliavin 
derivative. It is known that for enough regular stochastic processes, the difference of this integral with corresponding pathwise Riemann-Stieltjes integral can be 
explained with Malliavin trace operator. For more details see \cite{m}.
\end{itemize}

In \cite{b-c-f}, the authors studied the problem of finding a discrete approximation of the stochastic integral with respect to fractional Brownian motion defined
as divergence. They provide a discrete approximation of stochastic integrals of divergence type by means of the resolutions of the Fock space associated to fractional 
Brownian motion $B$.\\

The aim of this paper is to study in more details convergence of the uniform discretization of pathwise stochastic integrals
\begin{equation}\label{eq:into}
S= \int_{0}^{1} f'_-(B_t)\ud B_t,
\end{equation}
where $f: \R \to \R$ is a convex function. In \cite{a-m-v}, it is shown that such integrals can be understood in the generalized Lebesgue-Stieltjes integral sense.
Moreover, the authors considered the uniform discretization
\begin{equation*}
S_n := \sum_{i=1}^n f'_-(B_{\frac{i-1}{n}})(B_{\frac{i}{n}}-B_{\frac{i-1}{n}}), \quad n \in \N.
\end{equation*}
 They proved that $S_n$ converges to stochastic integral $S$ almost surely as $n$ tends to infinity. Note that, 
from financial application point of view, it is convenient to take the left hand points $\frac{i-1}{n}$ in the definition of $S_n$. In stochastic finance the 
discretization $S_n$ can be interpreted as total losses or gains of the discretized delta hedging strategy $($see \cite{a-m-v}$)$.\\

In this paper, we show that with some fine and detailed analysis one can prove $L^r$- convergence too, i.e.
\begin{equation*}
S_n \longrightarrow S \quad \text{ in } \ L^r,
\end{equation*}
as $n$ tends to infinity for some range of $r\ge 1$ and a large class of convex functions. To obtain such result, we use integration theory known as the generalized 
Lebesgue-Stieltjes integration theory, introduced by Z\"ahle \cite{z}, and developed by Nualart-Rascanu in \cite{n-r} together with Lemma \ref{l:estimate}. Moreover, 
we obtain a rate of convergence:
\begin{equation*}
\Big( \E \big| S_n - S \big|^r \Big)^{\frac{1}{r}} \leq C \left(\frac{1}{n}\right)^{ H - \frac{1}{2} - \epsilon} \quad n\ge 1,
\end{equation*}
for sufficiently small $\epsilon$ and $C$ is a constant independent of $n$.\\

The paper is organized as follows. In the section $2$, we state our main result. Section $3$ contains all auxiliary facts which we need to prove our main 
result. The section $4$ is devoted to the proofs.

\section{Main result}
Throughout the paper, $B= \{ B_t\}_{t\in [0,1]}$ stands for a fractional Brownian motion on the interval 
$[0,1]$ with Hurst parameter $H \in (\frac{1}{2},1)$. Let $f:\R \to \R$ be a convex 
function, and denote by $\mu$ the positive Radon measure corresponding to its second derivative. Let $p$ be a positive number such that
\begin{equation}
\label{range_p}
2H<p<\frac{H}{1-H}.
\end{equation} 
Define a function $C:\R\rightarrow\R$ by
\begin{equation}
\label{constant}
C(a)=\max(1,|a|)e^{-\frac{\min \{ a^2,(a-1)^2 \} }{2}}.
\end{equation} 
We consider the following technical assumptions related to the measure $\mu$:\\

$\mathbf{(H_{1})}$ \ For $p$ and $C(a)$ given in \ref{range_p} and \ref{constant}, it holds
\begin{equation*}
\int_{\R} C(a)^{\frac{1}{p}}\mu(\ud a) < \infty.
\end{equation*}

$\mathbf{(H_{2})}$ \ For $p$ and $C(a)$ given in \ref{range_p} and \ref{constant}, it holds

\begin{equation*}
\int_0^\infty C(\log a)^{\frac{1}{p}}\mu(\ud a) < \infty.
\end{equation*}
In what follows, the stochastic integrals with respect to fractional Brownian motion are 
understood in the sense of the generalized Lebesgue-Stieltjes integrals $($see section $3$ for more details$)$. Now we can state our main theorem.

\begin{thm}
\label{main_theorem}
Let $f:\R \to \R$ be a convex function. Put
\begin{equation*}
\label{RS_sum}
S_n = \sum_{i=1}^n f'_-(B_{\frac{i-1}{n}})(B_{\frac{i}{n}}-B_{\frac{i-1}{n}}), \ \text{ and } \ S = \int_0^1 f'_-(B_u)\ud B_u.
\end{equation*}
Let $r\in[1,p)$, for $p$ given in (\ref{range_p}). If the assumption $\mathbf{(H_1)}$ holds, then \\
\begin{description}
 \item[\textbf{(a)}] We have the convergence $S_n \rightarrow S$ in $L^r$.
\item[\textbf{(b)}] For every positive number $\beta$ satisfying
\begin{equation}
\label{range_beta}
1-H<\beta<\frac{H}{p},\quad \beta \ne 1-\frac{2H}{p},
\end{equation}
there exists a constant $C=C(H,\beta,r,p,f)$ such that
\begin{equation*}
\label{rate_of_convergence}
\Vert S_n-S \Vert_r = \big( \E |S_n - S|^r \big)^{\frac{1}{r}} \leq C \left(\frac{1}{n}\right)^{\frac{H}{p}-\beta}, \quad n\ge 1.
\end{equation*}
\end{description}
\end{thm}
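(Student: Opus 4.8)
The plan is to prove part (b) and deduce part (a) from it. Set $t_i=i/n$. Since $f'_-(B_{t_{i-1}})$ is constant on $[t_{i-1},t_i]$, its generalized Lebesgue--Stieltjes integral against $B$ over that interval equals $f'_-(B_{t_{i-1}})(B_{t_i}-B_{t_{i-1}})$, so by additivity
\begin{equation*}
S-S_n=\sum_{i=1}^n\int_{t_{i-1}}^{t_i}\big(f'_-(B_u)-f'_-(B_{t_{i-1}})\big)\,\ud B_u =: \sum_{i=1}^n R_i,\qquad t_i=i/n .
\end{equation*}
To each $R_i$ I would apply the Nualart-Rascanu/Z\"ahle estimate recalled in Section~3: for $\alpha\in(1-H,\tfrac{1}{p})$,
\begin{equation*}
|R_i|\le \big\|D_{t_{i-1}+}^{\alpha}g_i\big\|_{L^p[t_{i-1},t_i]}\,\big\|D_{t_i-}^{1-\alpha}B_{t_i-}\big\|_{L^{p'}[t_{i-1},t_i]},\qquad g_i(u):=f'_-(B_u)-f'_-(B_{t_{i-1}}),\ \tfrac{1}{p}+\tfrac{1}{p'}=1 .
\end{equation*}
This is where the exponent $p$ of \eqref{range_p} enters, as the exponent of the Lebesgue space that must carry the jump-bearing integrand. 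The constraint $\alpha>1-H$ makes the $B$-factor finite, and $\alpha<1/p$ makes the fractional derivative of the indicator-type jumps of $f'_-(B_\cdot)$ lie in $L^p$ in time; the interval $(1-H,1/p)$ is non-empty precisely because $p<H/(1-H)<1/(1-H)$.

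For the fractional Brownian factor I would use that, for every $\lambda<H$, $B$ is a.s. $\lambda$-H\"older with a H\"older constant having finite moments of all orders, together with self-similarity of $B$, to obtain $\|D_{t_i-}^{1-\alpha}B_{t_i-}\|_{L^{p'}[t_{i-1},t_i]}\le C\,G_\lambda^{(i)}\,(1/n)^{\lambda-1+\alpha+1/p'}$, where $G_\lambda^{(i)}$ is the H\"older constant of $B$ restricted to $[t_{i-1},t_i]$, for which $\|G_\lambda^{(i)}\|_{L^s(\Omega)}=(1/n)^{H-\lambda}\|G_\lambda^{(1)}\|_{L^s(\Omega)}$ for every $s$. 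This yields $\big\|\,\|D_{t_i-}^{1-\alpha}B_{t_i-}\|_{L^{p'}[t_{i-1},t_i]}\,\big\|_{L^s(\Omega)}\le C_s\,(1/n)^{H-1+\alpha+1/p'}$, uniformly in $i$.

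For the integrand factor, write $f'_-(x)-f'_-(y)=\int_{\R}\big(\mathbf 1_{\{y<a\le x\}}-\mathbf 1_{\{x<a\le y\}}\big)\mu(\ud a)$, so that $g_i(u)=\int_{\R}\Delta_a(u)\,\mu(\ud a)$ with $|\Delta_a(u)|\le\mathbf 1_{\{a\ \text{lies between}\ B_{t_{i-1}}\ \text{and}\ B_u\}}$. Using linearity of $D^{\alpha}_{t_{i-1}+}$ and Minkowski's integral inequality in $a$ (once for the time-$L^p$ norm, once for the probability-$L^p$ norm), the estimate of $\big\|\,\|D_{t_{i-1}+}^{\alpha}g_i\|_{L^p[t_{i-1},t_i]}\,\big\|_{L^p(\Omega)}$ reduces to controlling, for each level $a$, the quantity $\big(\int_{t_{i-1}}^{t_i}\E\,|D_{t_{i-1}+}^{\alpha}\Delta_a(u)|^p\,\ud u\big)^{1/p}$. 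This is exactly the situation handled by Lemma~\ref{l:estimate}, whose output bounds this by $C(a)^{1/p}$ times a negative power of $n$ — the power combining the $(1/n)^{(1-\alpha p)/p}$ cost of a single jump in the fractional derivative with the probability, of order a power of $1/n$ up to a Gaussian weight in $a$, that $B$ meets level $a$ during a subinterval of length $1/n$. Integrating in $a$ and invoking $\mathbf{(H_1)}$ to bound $\int_{\R}C(a)^{1/p}\mu(\ud a)<\infty$ gives a bound of the form $\big\|\,\|D_{t_{i-1}+}^{\alpha}g_i\|_{L^p[t_{i-1},t_i]}\,\big\|_{L^p(\Omega)}\le C\,\omega_i(n)$ with $\sum_i\omega_i(n)$ controllable; the subintervals abutting $t=0$, where the density of $B_t$ degenerates, are precisely those for which the Gaussian weight is indispensable, which is why $C$ is built from $\min\{a^2,(a-1)^2\}$ rather than from $a^2$ alone.

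Finally, H\"older's inequality on $\Omega$ with conjugate exponents $p/r$ and $p/(p-r)$ (legitimate since $r<p$) separates the two factors, $\|R_i\|_r\le\big\|\,\|D_{t_{i-1}+}^{\alpha}g_i\|_{L^p}\,\big\|_{L^p(\Omega)}\cdot\big\|\,\|D_{t_i-}^{1-\alpha}B_{t_i-}\|_{L^{p'}}\,\big\|_{L^{rp/(p-r)}(\Omega)}$; summing $\|S-S_n\|_r\le\sum_{i=1}^n\|R_i\|_r$, collecting the powers of $1/n$, and optimizing over $\alpha\in(1-H,1/p)$ and $\lambda<H$ produces the bound $C(1/n)^{H/p-\beta}$ for every $\beta$ in the range \eqref{range_beta}, the value $\beta=1-2H/p$ being excluded because at that threshold an auxiliary time integral of the shape $\int_0^{1/n}s^{-\gamma}\,\ud s$ degenerates into a logarithm. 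Part (a) follows by choosing any admissible $\beta<H/p$, which leaves a strictly positive exponent. The technical heart of the argument, and the expected main obstacle, is the per-level estimate of the previous paragraph — bounding the moments of the fractional-derivative norm of $f'_-(B_\cdot)-f'_-(B_{t_{i-1}})$ on each small subinterval uniformly in $i$, against a possibly singular $\mu$ and including the subintervals near $0$ — which is precisely the content of Lemma~\ref{l:estimate}; the remaining difficulty is the exponent bookkeeping that makes the accumulated power land on $H/p-\beta$ and no worse.
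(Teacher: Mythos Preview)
Your approach differs from the paper's in a substantive way. The paper writes $S_n-S=\int_0^1 h_n(t)\,\ud B_t$ as a \emph{single} generalized Lebesgue--Stieltjes integral over $[0,1]$, bounds it by Theorem~\ref{t:n-r} as $C(\omega)\,\|h_n\|_{2,\beta}$ (the $L^1$--$L^\infty$ Nualart--Rascanu form, with the random constant $C(\omega)=\sup_{s<t}|D^{1-\beta}_{t-}B_{t-}(s)|$ independent of $n$ and having all moments by Lemma~\ref{l:n-r}), and then estimates $\big(\E\|h_n\|_{2,\beta}^p\big)^{1/p}$ via Minkowski in time and Lemma~\ref{l:estimate}; this produces the terms $J_{n,1},J_{n,2},I_{n,1},I_{n,2},I_{n,3}$, the last of which (cross-interval interactions) is the most laborious. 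Your localization to each $[t_{i-1},t_i]$ with the $L^p$--$L^{p'}$ Z\"ahle pairing is a genuinely different organization that, in principle, avoids $I_{n,3}$ altogether and harvests extra powers of $1/n$ from the $B$-factor on short intervals.

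However, there is a real gap in the $g_i$-factor. Your claim that $\alpha<1/p$ suffices rests on the heuristic that $u\mapsto f'_-(B_u)$ behaves like a step function with an isolated jump, so that its $\alpha$-fractional derivative lies in $L^p$ once $\alpha p<1$. But $u\mapsto\mathbf 1_{B_u>a}$ is not a step function with isolated jumps: for fractional Brownian motion the level set $\{u:B_u=a\}$ need not be discrete, and the ``single jump'' cost $(1/n)^{(1-\alpha p)/p}$ is not justified pathwise. If instead you push the expectation through by Minkowski --- which is what your displayed quantity $\big(\int_{t_{i-1}}^{t_i}\E|D^\alpha_{t_{i-1}+}\Delta_a(u)|^p\,\ud u\big)^{1/p}$ invites and what the paper actually does --- then the integral term of the Weyl derivative produces $\int_{t_{i-1}}^u(u-v)^{H/p-\alpha-1}\,\ud v$ after applying Lemma~\ref{l:estimate}, and convergence forces $\alpha<H/p$, not $\alpha<1/p$; this is exactly the paper's range for $\beta$. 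Relatedly, Lemma~\ref{l:estimate} bounds only the two-point probability $\P(B_t>a>B_s)$, not the crossing probability ``$B$ meets level $a$ during a subinterval'' that your heuristic requires. With the corrected range $\alpha\in(1-H,H/p)$ and the moment (rather than pathwise) treatment of $\|D^\alpha g_i\|_{L^p}$, your scheme becomes plausible, but the exponent bookkeeping you defer to ``optimizing over $\alpha$ and $\lambda$'' would then have to be carried out explicitly.
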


\begin{rmk}
 For  $r\in[1,p)$, the assumption $\mathbf{(H_1)}$ implies that $S,S_n \in L^r$.
\end{rmk}

\begin{cor}\label{main_corollary}
Let $X= \{ X_t\}_{t \in [0,1]}$ be a geometric fractional Brownian motion, i.e. $X_t =e^{B_t}$. Put 
\begin{equation*}
\label{RS_sums2}
\tilde{S}_n = \sum_{i=1}^n f'_-(X_{\frac{i-1}{n}})(X_{\frac{i}{n}}-X_{\frac{i-1}{n}}), \ \text{ and } \ \tilde{S} = \int_0^1 f'_-(X_u)\ud X_u.
\end{equation*}

Let $p$ and $\beta$ be positive numbers such that (\ref{range_p}) 
and (\ref{range_beta}) holds and let $r\in[1,p)$. If the assumption $\mathbf{(H_2)}$ holds, then \\
\begin{description}
 \item[\textbf{(a)}] We have the convergence 
$\tilde{S}_n \rightarrow \tilde{S}$ in $L^r$.
\item[\textbf{(b)}]
There exists a constant $\tilde{C}=\tilde{C}(H,\beta,r,p,f)$ such that
\begin{equation*}
\label{rate_of_convergence2}
\Vert\tilde{S}_n - \tilde{S} \Vert_r= \big( \E |\tilde{S}_n - \tilde{S} |^r \big)^{\frac{1}{r}} \leq \tilde{C} \left(\frac{1}{n}\right)^{\frac{H}{p}-\beta}, \quad n\ge 1.
\end{equation*}
\end{description}
\end{cor}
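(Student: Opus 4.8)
The plan is to repeat the proof of Theorem \ref{main_theorem} with the geometric fractional Brownian motion $X_t=e^{B_t}$ in the role of $B_t$, rather than to try to deduce the corollary from Theorem \ref{main_theorem} by a change of function: the natural candidate $x\mapsto f(e^x)$ is in general not convex, and its second-derivative measure would have to be integrated against a weight that is not the one appearing in $\mathbf{(H_2)}$. The only structural change in the argument is that the Gaussian marginal law of $B_t$ is replaced by the log-normal marginal law of $X_t$, and this is exactly why the hypothesis passes from $\mathbf{(H_1)}$ to $\mathbf{(H_2)}$. First I would identify the limit: since $x\mapsto e^x$ is $C^\infty$ and $B$ has zero quadratic variation (because $H>\tfrac12$), the change-of-variable rule for generalized Lebesgue--Stieltjes integrals gives $\ud X_u=X_u\,\ud B_u$ and, for the convex $f$, $\tilde S=\int_0^1 f'_-(X_u)\,\ud X_u=f(X_1)-f(X_0)$ almost surely (replacing $f'$ by $f'_-$ is harmless, as they disagree only on a Lebesgue-null set, which the occupation measure of $X$ on $(0,\infty)$ does not charge). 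In particular, arguing as in the Remark following Theorem \ref{main_theorem}, $\mathbf{(H_2)}$ gives $\tilde S,\tilde S_n\in L^r$ for $r\in[1,p)$.

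Next I would expand the discretization error, using $\tilde S=f(X_1)-f(X_0)$ and the telescoping identity $f(X_1)-f(X_0)=\sum_{i=1}^n\big(f(X_{t_i})-f(X_{t_{i-1}})\big)$ with $t_i=i/n$:
\[
\tilde S_n-\tilde S=-\sum_{i=1}^{n}\Big(f(X_{t_i})-f(X_{t_{i-1}})-f'_-(X_{t_{i-1}})\big(X_{t_i}-X_{t_{i-1}}\big)\Big).
\]
By convexity of $f$, each summand is nonnegative and equals $\int_{(0,\infty)}\kappa_i(s)\,\mu(\ud s)$, where $\kappa_i$ is supported on the values of $s$ lying between $X_{t_{i-1}}$ and $X_{t_i}$ and satisfies $0\le\kappa_i(s)\le|X_{t_i}-X_{t_{i-1}}|$. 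By Minkowski's inequality and Fubini it then suffices to bound $\sum_{i=1}^n\int_{(0,\infty)}\Vert\kappa_i(s)\Vert_r\,\mu(\ud s)$, so the problem reduces to an estimate of $\Vert\kappa_i(s)\Vert_r$. For this I would use three facts: $|X_{t_i}-X_{t_{i-1}}|\le e^{\max(B_{t_{i-1}},B_{t_i})}|B_{t_i}-B_{t_{i-1}}|$; the support of $\kappa_i(\cdot)$ forces $B_{t_{i-1}}$ to lie within $|B_{t_i}-B_{t_{i-1}}|$ of $\log s$, so on that event $e^{\max(B_{t_{i-1}},B_{t_i})}$ is of order $s$; and the marginal density of $X_{t_{i-1}}$ at $s$ equals $s^{-1}$ times a Gaussian density in $\log s$. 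Feeding these into Hölder's inequality together with the Gaussian moment bounds $\E|B_{t_i}-B_{t_{i-1}}|^m\le C_m\,n^{-mH}$ (all $m\ge1$), the factor $s$ cancels the singular factor $s^{-1}$ of the log-normal density, and what is left is precisely the type of bound used in the proof of Theorem \ref{main_theorem}, but with $C(a)$ replaced by $C(\log s)$ and $\mu$ integrated over $(0,\infty)$.

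It then remains to sum over $i$: applying Lemma \ref{l:estimate} to the deterministic $i$-sums produced by the residual Gaussian weights and the powers of $n^{-H}$, with the $\mu$-integral $\int_{(0,\infty)}C(\log s)^{1/p}\mu(\ud s)$ finite by $\mathbf{(H_2)}$, and optimizing over $\beta$ in the range \eqref{range_beta} (with $p$ as in \eqref{range_p}), yields $\Vert\tilde S_n-\tilde S\Vert_r\le\tilde C\,n^{-(H/p-\beta)}$, which is assertion (b); since $H/p-\beta>0$ for such $\beta$, assertion (a) follows at once. I expect essentially all of the difficulty to sit in the estimate for $\Vert\kappa_i(s)\Vert_r$: making it hold uniformly in $i$, and in particular for indices $i$ with $t_{i-1}$ close to $0$, where the marginal density of $X_{t_{i-1}}$ degenerates, while absorbing the exponential factor $e^{\max(B_{t_{i-1}},B_{t_i})}$ without spoiling the integrability against $\mu$ — this is exactly what the cut-off function $C(\cdot)$ of \eqref{constant}, the admissible range \eqref{range_p} for $p$, and hence the precise form of $\mathbf{(H_2)}$ are tailored for.
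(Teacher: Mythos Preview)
Your approach differs from the paper's and, as written, has a gap. You dismiss reducing to Theorem~\ref{main_theorem} because $x\mapsto f(e^x)$ need not be convex, but the paper does not reduce at the level of $f$; it reduces at the level of the integrand. For the call $f(x)=(x-a)^+$ with $a>0$, writing $\tilde S_n-\tilde S=\int_0^1 h_n^{X,a}(t)\,\ud B_t$ (using $\ud X_u=X_u\,\ud B_u$), the key observation is the factorization
\[
h_n^{X,a}(t)=X_t\,h_n^{\log a}(t),
\]
where $h_n^{\log a}$ is precisely the integrand (\ref{h_n_simple}) from Step~1 of the proof of Theorem~\ref{main_theorem}, at level $\log a$. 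One then bounds the Besov norm $\|h_n^{X,a}\|_{2,\beta}$: for the first term, factor out $\overline X=\sup_{t}X_t$ (all moments finite); for the second, split $|h_n^{X,a}(t)-h_n^{X,a}(s)|\le |h_n^{\log a}(t)||X_t-X_s|+|X_s||h_n^{\log a}(t)-h_n^{\log a}(s)|$ and absorb $|X_t-X_s|$ via the H\"older continuity of $X$. Everything then feeds directly into the estimates already established in Theorem~\ref{main_theorem} with $a$ replaced by $\log a$, which is exactly why $\mathbf{(H_1)}$ becomes $\mathbf{(H_2)}$. No new moment computation is needed, and the general convex case follows as in Steps~2--3 there.

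Your telescoping route abandons the Besov framework, and with it the parameter $\beta$: the rate $n^{-(H/p-\beta)}$ has no natural origin in your computation, so ``optimizing over $\beta$'' is vacuous (and Lemma~\ref{l:estimate} is the crossing-probability bound, not a tool for summing deterministic series). More substantively, the claimed cancellation of $s$ against an $s^{-1}$ from the log-normal density does not occur as stated. Bounding $\kappa_i(s)\le s\,e^{|\Delta B|}|\Delta B|\,\mathbf 1_{\{\log s\text{ between }B_{t_{i-1}},B_{t_i}\}}$ and then applying H\"older together with Lemma~\ref{l:estimate} (which is formulated for $B$ and carries no compensating $s^{-1}$) leaves a bound of order $s\,C(\log s)^{1/p}$ to integrate against $\mu(\ud s)$; the resulting integrability requirement is strictly stronger than $\mathbf{(H_2)}$. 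To rescue the argument under $\mathbf{(H_2)}$ alone you would have to keep the factor $e^{B}$ inside the probability estimate rather than pulling it out --- which is, in effect, exactly what the paper's factorization $h_n^{X,a}=X_t\,h_n^{\log a}$ together with the random constant $\overline X$ accomplishes.
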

\begin{rmk}
 For $r\in[1,p)$, the assumption $\mathbf{(H_2)}$ implies that $\tilde{S},\tilde{S_n} \in L^r$.
\end{rmk}

\begin{rmk}
According to Theorem 2.1, rate of convergence can be any number in a certain interval. Therefore, one can improve the rate by choosing suitable parameters 
$p$ and $\beta$. In the best, rate of convergence can be brought as closely as possible to $H - \frac{1}{2}$, by letting $p$ very close to $2H$ and $\beta$ very 
close to $1-H$. However, as a price the constants $C$ and $\tilde{C}$ become larger and tends to infinity as $p$ decreases to $2H$ or $\beta$ decreases to $1-H$. 
\end{rmk}
\begin{rmk} It is not clear whether the approximate rate $H-\frac{1}{2}$ is the best possible rate that one can obtain.

\end{rmk}

\subsection{Related results and comparison with Brownian motion}
For more smooth integrands one can use \textit{Young-Loeve} estimate to get a better rate. In following theorem, the stochastic integral coincides with the 
Riemann-Stieltjes integral $($see remark \ref{rmk:coinside}$)$.

\begin{thm}\label{main-thm-2}
Let $f:\R \to \R$ be a Lipschitz function. Put
\begin{equation*}
\hat{S}_n = \sum_{i=1}^n f(B_{\frac{i-1}{n}})(B_{\frac{i}{n}}-B_{\frac{i-1}{n}}), \ \text{ and } \ \hat{S} = \int_0^1 f(B_u)\ud B_u.
\end{equation*}
Let $r  \ge 1$. Then for every $\epsilon \in (0,2H-1)$, there exists a constant $\hat{C}=\hat{C}(\epsilon,H,r,f)$ such that \\
\begin{equation*}
\Vert\hat{S}_n - \hat{S} \Vert_r= \big( \E |\hat{S}_n - \hat{S} |^r \big)^{\frac{1}{r}} \leq \hat{C} \left(\frac{1}{n}\right)^{2H-1-\epsilon}, \quad n\ge 1.
\end{equation*}
\end{thm}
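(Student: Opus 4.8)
The plan is to exploit the fact that, for a Lipschitz $f$, the process $Y_t = f(B_t)$ has H\"older continuous paths of any order $\gamma<H$, so both $\hat{S}$ and the Riemann sums $\hat{S}_n$ are genuine Riemann--Stieltjes integrals and the celebrated Young--Loeve estimate applies. Concretely, fix $\gamma,\gamma'$ with $\gamma+\gamma'>1$ and $\gamma<H$, $\gamma'<H$; since $2H>1$ this is possible. Write $\hat{S}-\hat{S}_n = \sum_{i=1}^n \int_{(i-1)/n}^{i/n}\big(f(B_u)-f(B_{(i-1)/n})\big)\ud B_u$. The Young--Loeve inequality on each subinterval $[(i-1)/n,i/n]$ gives, with the usual constant $K_{\gamma,\gamma'}=\big(1-2^{1-\gamma-\gamma'}\big)^{-1}$,
\begin{equation*}
\Big| \int_{(i-1)/n}^{i/n}\big(f(B_u)-f(B_{(i-1)/n})\big)\ud B_u - \big(f(B_{(i-1)/n})-f(B_{(i-1)/n})\big)\big(B_{i/n}-B_{(i-1)/n}\big)\Big| \le K_{\gamma,\gamma'}\, \|f(B_\cdot)\|_{\gamma,[\frac{i-1}{n},\frac{i}{n}]}\,\|B\|_{\gamma',[\frac{i-1}{n},\frac{i}{n}]},
\end{equation*}
and the second term on the left vanishes. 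Using the Lipschitz bound $\|f(B_\cdot)\|_{\gamma,I}\le L\,\|B\|_{\gamma,I}$ and crude bounds $\|B\|_{\gamma,I}\le \|B\|_{\gamma,[0,1]}$, $\|B\|_{\gamma',I}\le \|B\|_{\gamma',[0,1]}$ plus one factor of $|I|^{\gamma}$ or $|I|^{\gamma'}$ extracted to create a gain, summing over the $n$ intervals yields a deterministic-times-random bound of the form $|\hat{S}-\hat{S}_n|\le C_{\gamma,\gamma'}\, n^{1-\gamma-\gamma'}\,\|B\|_{\gamma,[0,1]}\,\|B\|_{\gamma',[0,1]}$.

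The next step is to take $L^r$ norms. By Cauchy--Schwarz (or H\"older) it suffices to control $\E\|B\|_{\gamma,[0,1]}^{2r}$ and $\E\|B\|_{\gamma',[0,1]}^{2r}$; both are finite for every $\gamma,\gamma'<H$ and every $r\ge1$, by the Garsia--Rodemich--Rumsey inequality together with the scaling $\E|B_t-B_s|^{2k}=c_k|t-s|^{2kH}$ of fractional Brownian motion (this is a standard fact; it is exactly the kind of moment estimate collected in the auxiliary Section 3, cf. the lemmas used for Theorem 2.1). Hence $\|\,\|B\|_{\gamma,[0,1]}\|_{2r}$ and $\|\,\|B\|_{\gamma',[0,1]}\|_{2r}$ are finite constants depending only on $H,r,\gamma,\gamma'$, and we obtain
\begin{equation*}
\|\hat{S}_n-\hat{S}\|_r \le \hat{C}(\gamma,\gamma',H,r,f)\, \Big(\frac{1}{n}\Big)^{\gamma+\gamma'-1}.
\end{equation*}

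Finally I would optimize the exponent: given $\epsilon\in(0,2H-1)$, choose $\gamma=\gamma'=H-\tfrac{\epsilon}{2}$, which is admissible since then $\gamma<H$ and $\gamma+\gamma'=2H-\epsilon>1$; this gives exponent $\gamma+\gamma'-1=2H-1-\epsilon$, as claimed, with $\hat{C}=\hat{C}(\epsilon,H,r,f)$. One should also record (perhaps as the earlier Remark~\ref{rmk:coinside}) that for Lipschitz $f$ the generalized Lebesgue--Stieltjes integral $\int_0^1 f(B_u)\ud B_u$ coincides with the Young/Riemann--Stieltjes integral, so that the statement is about the same object $\hat{S}$ throughout. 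The only mildly delicate point is the moment bound on the H\"older seminorms of $B$, but this is entirely classical (GRR plus Kolmogorov-type scaling); the rest is a direct application of Young--Loeve on each mesh interval and summation. No genuine obstacle arises here — the theorem is essentially a clean corollary of Young's estimate, which is precisely why it appears in the "Related results and comparison" subsection rather than requiring the heavier machinery used for Theorem~\ref{main_theorem}.
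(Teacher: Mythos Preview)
Your approach is essentially the same as the paper's: apply the Young--Loeve estimate on each subinterval $[(i-1)/n,i/n]$, use that $f(B_\cdot)$ inherits the H\"older/variation regularity of $B$ via the Lipschitz bound, sum the $n$ local errors, and then take $L^r$-norms using that the H\"older (equivalently $p$-variation for $p>1/H$) seminorm of $B$ has all moments. The paper phrases things in $p$-variation language rather than H\"older seminorms, but the argument is identical; just be careful that the H\"older form of Young--Loeve carries the factor $(t-s)^{\gamma+\gamma'}$ (equivalently, both variation norms contribute a factor $|I|^{\gamma}$ and $|I|^{\gamma'}$), so that summing $n$ terms indeed produces $n^{1-\gamma-\gamma'}$.
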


For standard Brownian motion $W=\{W_t\}_{t\in[0,1]}$ i.e. $H=\frac{1}{2}$, the stochastic integral is understood in the sense of It\'{o} integral. In this case, we 
consider the following assumption related to the measure $\mu$:\\

$\mathbf{(H_{3})}$ \ It holds
\begin{equation*}
\int_{\R} e^{-\frac{\min \{ a^2,(a-1)^2 \} }{2}}\mu(\ud a) < \infty.
\end{equation*}

Then, we obtain the following result:

\begin{thm}
\label{main-thm-bm}
Let $W= \{ W_t\}_{t\in [0,1]}$ be a standard Brownian motion and $f:\R \to \R$ be a convex function. Put
\begin{equation*}
\label{RS_sum}
T_n = \sum_{i=1}^n f'_-(W_{\frac{i-1}{n}})(W_{\frac{i}{n}}-W_{\frac{i-1}{n}}), \ \text{ and } \ T = \int_0^1 f'_-(W_u)\ud W_u.
\end{equation*}
Let $r \in [ 1,2]$. If the assumption  $\mathbf{(H_3)}$ holds, then\\
\begin{description}
 \item[\textbf{(a)}] We have the convergence $T_n \rightarrow T$ in $L^r$.
\item[\textbf{(b)}] There exists a constant $C=C(r,f)$ such that
\begin{equation*}
\label{rate_of_convergence3}
\Vert T_n-T \Vert_r = \big( \E |T_n - T|^r \big)^{\frac{1}{r}} \leq C \left(\frac{1}{n}\right)^{\frac{1}{4}}, \quad n\ge 1.
\end{equation*}
\end{description}
\end{thm}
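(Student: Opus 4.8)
The plan is to reduce everything to the case $r=2$ and then exploit the It\^o isometry, since for $H=\frac12$ the limit $T$ is a genuine It\^o integral. For any random variable $X$ and $r\in[1,2]$ one has $\Vert X\Vert_r\le\Vert X\Vert_2$, so it suffices to prove part \textbf{(b)} with $r=2$, and once $\Vert T_n-T\Vert_2\to 0$ is established part \textbf{(a)} follows for every $r\in[1,2]$. The assumption $\mathbf{(H_3)}$ is invoked first to ensure that $u\mapsto f'_-(W_u)$ lies in $L^2([0,1]\times\Omega)$, so that $T$ and $T_n$ are well-defined square-integrable random variables and $T$ is a bona fide It\^o integral.

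Write $g:=f'_-$, a non-decreasing function, and $\kappa_n(u):=\frac1n\lfloor nu\rfloor$. Then $T_n=\int_0^1 g(W_{\kappa_n(u)})\,\ud W_u$ (the integrand being a simple adapted process) and $T=\int_0^1 g(W_u)\,\ud W_u$, hence $T_n-T=\int_0^1\big(g(W_{\kappa_n(u)})-g(W_u)\big)\,\ud W_u$, and the It\^o isometry gives
\[
\E|T_n-T|^2=\sum_{i=1}^{n}\int_{\frac{i-1}{n}}^{\frac{i}{n}}\E\Big[\big(g(W_{\frac{i-1}{n}})-g(W_u)\big)^2\Big]\,\ud u .
\]
The heart of the matter is thus a one-dimensional estimate. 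For $0<s<u\le 1$, since $g(y)-g(x)=\mu([x,y))$ for $x<y$, we have $\big(g(W_s)-g(W_u)\big)^2=\mu(I_{s,u})^2$, where $I_{s,u}$ denotes the random interval with endpoints $W_s,W_u$. Expanding $\mu(I_{s,u})^2=\int_{\R}\int_{\R}\mathbf{1}\{a\in I_{s,u}\}\,\mathbf{1}\{b\in I_{s,u}\}\,\mu(\ud a)\mu(\ud b)$, taking expectation, observing that $\{a,b\in I_{s,u}\}$ forces $W_s,W_u$ to straddle $[a\wedge b,a\vee b]$, conditioning on $W_s\sim N(0,s)$ and applying the Gaussian tail bound $1-\Phi(x)\le\frac12 e^{-x^2/2}$ followed by an explicit Gaussian integration, one obtains the $H=\frac12$ counterpart of Lemma \ref{l:estimate}, namely a bound
\[
\E\big[\mu(I_{s,u})^2\big]\le C\,\sqrt{\tfrac{u-s}{u}},\qquad 0<s<u\le 1,
\]
with $C=C(f)<\infty$ precisely because $e^{-a^2/(2u)}\le e^{-\min\{a^2,(a-1)^2\}/2}$ for $u\le 1$ together with $\mathbf{(H_3)}$ makes the corresponding $\mu$-integral converge.

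It remains to sum. Inserting $s=\frac{i-1}{n}$ and $u\in[\frac{i-1}{n},\frac{i}{n}]$, one uses $\sqrt{(u-s)/u}\le 1$ for $i=1$ and $\sqrt{(u-s)/u}\le(i-1)^{-1/2}$ for $i\ge 2$, so that
\[
\E|T_n-T|^2\le\frac{C}{n}\Big(1+\sum_{j=1}^{n-1}\frac{1}{\sqrt j}\Big)\le\frac{C'}{\sqrt n},
\]
whence $\Vert T_n-T\Vert_2\le C''\,n^{-1/4}$; combined with the first paragraph this establishes \textbf{(a)} and \textbf{(b)}. Equivalently, via the It\^o--Tanaka formula one may write $T_n-T=\frac12\int_{\R}L_1^a\,\mu(\ud a)-\sum_{i=1}^n R_i$, where $L^a$ is Brownian local time and $R_i\ge 0$ is the convexity (second-order Taylor) remainder of $f$ over the interval with endpoints $W_{\frac{i-1}{n}},W_{\frac{i}{n}}$; this reduces the problem to the $L^2$ rate of a Riemann-type approximation of Brownian local time, the classical source of the exponent $\tfrac14$.

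The step I expect to be the main obstacle is the increment estimate $\E[\mu(I_{s,u})^2]\le C\sqrt{(u-s)/u}$: one must control the double integral against $\mu\otimes\mu$ — in particular the diagonal contribution $a=b$ and the interplay between the mass of $\mu$ near infinity and the Gaussian weight — with a constant finite under $\mathbf{(H_3)}$ alone, and, crucially, with the self-similar factor $\sqrt{(u-s)/u}$ rather than merely $\sqrt{u-s}$. It is this sharpened decay at the left endpoints $\frac{i-1}{n}$ that upgrades the trivial bound $\sum_i\frac1n=O(1)$ to the $O(n^{-1/2})$ needed for the rate $n^{-1/4}$; a cruder estimate would only yield $\Vert T_n-T\Vert_2=O(1)$.
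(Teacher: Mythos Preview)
Your proposal is essentially correct and follows the same route as the paper's (very brief) proof: reduce to $r=2$ by monotonicity of $L^r$ norms, apply the It\^o isometry, and control $\int_0^1\E|h_n(t)|^2\,\ud t$ via the Brownian level-crossing estimate (the paper's Lemma~\ref{l:estimate_bm}), handling the first subinterval separately and summing $\sum_{i\ge 2}(i-1)^{-1/2}\lesssim\sqrt n$. The paper does this first for $f(x)=(x-a)^+$ and then passes to general convex $f$ by the Minkowski/localization argument of Steps~2--3 in the proof of Theorem~\ref{main_theorem}; your direct expansion of $\mu(I_{s,u})^2$ as a $\mu\otimes\mu$ double integral is an equivalent reorganization of the same computation.

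Two minor points. First, the crossing bound in Lemma~\ref{l:estimate_bm} is $C\,e^{-\min\{a^2,(a-1)^2\}/2}\sqrt{(t-s)/s}$, with $s$ (not $u$) in the denominator; the factor $1/\sqrt s$ is a \emph{cost} of the estimate, not a gain, and the rate $n^{-1/4}$ is driven entirely by the $\sqrt{t-s}$ in the numerator --- your closing paragraph slightly misattributes this. Second, and this applies equally to the paper's sketch, when one passes from the call to general $\mu$ (either by Minkowski on $\|T_n^a-T^a\|_2$ or by bounding $\P(a,b\in I_{s,u})\le\sqrt{\P(a\in I_{s,u})\P(b\in I_{s,u})}$ in your double integral), the square root turns $e^{-\min\{a^2,(a-1)^2\}/2}$ into $e^{-\min\{a^2,(a-1)^2\}/4}$; so the integrability condition that actually makes the argument close is marginally stronger than $\mathbf{(H_3)}$ as written. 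You were right to flag this step as the crux.
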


\begin{rmk}
The rate $n^{- \frac{1}{4}}$ obtained in Theorem \ref{main-thm-bm} is sharp for the class of convex functions satisfying the assumption $\mathbf{(H_{3})}$. For example 
for the convex function $f(x)=(x-a)^+$, there exists a constant $C=C(f)$ such that we have  
\begin{equation*}
  C \ n^{- \frac{1}{4}} \le \big( \E |T_n - T|^2 \big)^{\frac{1}{2}}, \quad n\ge 2,
\end{equation*}
where $T_n$ and $T$ are as in Theorem \ref{main-thm-bm}.
\end{rmk}

\begin{rmk}
To compare with the case of fractional Brownian motion, we see that the rate of convergence is better for fractional Brownian motion than for standard Brownian motion 
if $H>\frac{3}{4}$, and worse if $H<\frac{3}{4}$. It is known that $($see \cite{ch}$)$ the mixed Brownian-fractional Brownian motion $X= W + B$ is a semimartingale, if 
$H \in( \frac34,1)$, and for $H\in \left( \frac12 , \frac34 \right]$, $X$ is not a semimartingale with respect to its own filtration $\F ^X$. 
\end{rmk}
For smooth functions, using It\'{o} isometry one can easily get the rate $n^{-\frac{1}{2}}$. This is the subject of the next theorem. It is known that this is the 
best possible rate one can get for Lipschitz functions. See \cite[Remark 3, p.694]{w-w} and references therein.

\begin{thm}\label{main-thm-bm2}
Let $W= \{ W_t\}_{t\in [0,1]}$ be a standard Brownian motion and $f:\R \to \R$ be a Lipschitz function. Put
\begin{equation*}
\hat{T}_n = \sum_{i=1}^n f(W_{\frac{i-1}{n}})(W_{\frac{i}{n}}-W_{\frac{i-1}{n}}), \ \text{ and } \ \hat{T} = \int_0^1 f(W_u)\ud W_u.
\end{equation*}
Let $r\geq 1$. Then there exists a constant $\hat{C}=\hat{C}(f,r)$ such that \\
\begin{equation*}
\Vert\hat{T}_n - \hat{T} \Vert_r= \big( \E |\hat{T}_n - \hat{T} |^r \big)^{\frac{1}{r}} \leq \hat{C} \frac{1}{\sqrt{n}}, \quad n\ge 1.
\end{equation*}
\end{thm}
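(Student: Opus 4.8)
The plan is to express the discretization error as a single It\^o integral and control it by the Burkholder--Davis--Gundy (BDG) inequality. Since $f$ is Lipschitz, say with constant $L$, the process $u\mapsto f(W_u)$ is adapted and square integrable, so $\hat{T}=\int_0^1 f(W_u)\,\ud W_u$ is a genuine It\^o integral; likewise $\hat{T}_n=\int_0^1 g^{(n)}_u\,\ud W_u$ with $g^{(n)}_u=\sum_{i=1}^n f(W_{(i-1)/n})\mathbf{1}_{((i-1)/n,\,i/n]}(u)$. Consequently
\[
\hat{T}_n-\hat{T}=\int_0^1 h^{(n)}_u\,\ud W_u,\qquad h^{(n)}_u=\sum_{i=1}^n\big(f(W_{(i-1)/n})-f(W_u)\big)\mathbf{1}_{((i-1)/n,\,i/n]}(u).
\]

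First I would reduce to the range $r\ge 2$: since we work on a probability space, $\Vert\cdot\Vert_r\le\Vert\cdot\Vert_2$ for $r<2$, so it suffices to prove the estimate for $r\ge 2$. For such $r$, BDG provides a constant $C_r$ with
\[
\E\big|\hat{T}_n-\hat{T}\big|^r\le C_r\,\E\Big(\int_0^1|h^{(n)}_u|^2\,\ud u\Big)^{r/2}.
\]
By the Lipschitz bound, on each subinterval $((i-1)/n,\,i/n]$ one has $|h^{(n)}_u|\le L\,|W_u-W_{(i-1)/n}|$, hence
\[
\int_0^1|h^{(n)}_u|^2\,\ud u\le L^2\sum_{i=1}^n\int_{(i-1)/n}^{i/n}|W_u-W_{(i-1)/n}|^2\,\ud u.
\]

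Next I would apply Minkowski's integral inequality in $L^{r/2}(\Omega)$ to move the $L^{r/2}$-norm inside the sum and the integral, and then use the Gaussian moment identity $\big\||W_u-W_{(i-1)/n}|^2\big\|_{r/2}=\Vert W_u-W_{(i-1)/n}\Vert_r^2=c_r\,(u-\tfrac{i-1}{n})$ for a constant $c_r$ depending only on $r$. Since $\sum_{i=1}^n\int_{(i-1)/n}^{i/n}c_r\,(u-\tfrac{i-1}{n})\,\ud u=\tfrac{c_r}{2n}$, this yields $\big\|\int_0^1|h^{(n)}_u|^2\,\ud u\big\|_{r/2}\le \tfrac{L^2c_r}{2n}$, and therefore $\Vert\hat{T}_n-\hat{T}\Vert_r\le C_r^{1/r}\big(\tfrac{L^2c_r}{2n}\big)^{1/2}=\hat{C}\,n^{-1/2}$; the same chain of estimates also delivers the $L^r$-convergence claimed implicitly.

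I do not expect a genuine obstacle here: the only points requiring a little care are justifying the interchange of norm and integral via Minkowski's inequality and tracking the dependence of $C_r$ and $c_r$ on $r$ so that the final constant is finite. In the case $r=2$ one can bypass BDG entirely and invoke the It\^o isometry directly, obtaining $\E|\hat{T}_n-\hat{T}|^2=\E\int_0^1|h^{(n)}_u|^2\,\ud u\le L^2/(2n)$, which already makes the mechanism transparent.
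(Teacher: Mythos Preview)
Your argument is correct. The paper does not actually supply a proof for this theorem; it merely remarks beforehand that ``using It\^o isometry one can easily get the rate $n^{-1/2}$,'' and your approach carries this out in full: the $r=2$ case is exactly the It\^o isometry computation, and for general $r\ge 2$ you use the natural BDG extension together with Minkowski's integral inequality, while $r<2$ follows by monotonicity of $L^r$ norms on a probability space.
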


\section{Auxiliary facts}

 \subsection{Pathwise stochastic integration in fractional Besov-type spaces} \label{gls}
Since fractional Brownian motion in not a semimartingale,  hence the stochastic integral with 
respect to fractional Brownian motion $B$ must be defined. Using the smoothness of 
 the sample paths of $B$, when $H \in (\frac{1}{2},1)$, one can define the so-called \textit{generalized Lebesgue-Stieltjes integral}. 
We shall give some details of the construction of generalized Lebesgue-Stieltjes integrals in this section. For more information see \cite[Section 2.1.2]{m}.

\begin{defn}
Fix $ 0 <\beta < 1 $.\\

(i) Let $ W^{\beta}_1 =  W^{\beta}_1 ([0,T])$ be the space of real-valued measurable functions $ f :[0,T] \to \mathbb{R}$ such that

\begin{equation*}
\Vert f \Vert _{1,\beta} := \sup _{0 \le s < t \le T} \left( \frac{|f(t) - f(s)|}{(t-s)^\beta} + \int _{s}^{t} \frac{|f(u) - f(s) |}{(u-s)^ {1+\beta }} du \right) < \infty .
\end{equation*}

(ii) Let $W^{\beta}_2 =  W^{\beta}_2 ([0,T])$ be the space of real-valued measurable functions  $ f :[0,T] \to \mathbb{R}$ such that

\begin{equation*}
\Vert f \Vert _{2,\beta} := \int_{0}^{T} \frac{|f(s)|}{s^ \beta} ds + \int_{0}^{T}\int_{0}^{s} \frac{|f(u) - f(s) |}{(u-s)^ {1+\beta }} du ds < \infty .
\end{equation*}

\end{defn}

\begin{rmk}\label{r:rmk1}

The Besov spaces are closely related to the spaces of H{\"o}lder continuous functions. More precisely, for any $ 0< \epsilon < \beta \wedge (1- \beta)$,

\vskip0.25cm

\begin{center}
$C^{\beta + \epsilon}([0,T]) \subset W^{\beta}_{1} ([0,T])\subset C^{\beta - \epsilon}([0,T]) \quad \text{and} \quad C^{\beta + \epsilon}([0,T]) \subset W^{\beta}_{2} ([0,T]) $,
\end{center}

where $C^{\gamma }([0,T])$ denotes H\"older continuous functions of order $\gamma$.

\end{rmk}

Recall that the trajectories of $B$ belong to $ C^{\gamma }([0,T]) $ almost surely for any $T>0$ and any $0<\gamma < H$. This follows from the Kolmogorov continuity theorem. By remark \ref{r:rmk1}, we obtain that the trajectories of $B$ belong to $ W^{\beta}_1 ([0,T]) $ almost surely for any $T>0$ and any $0<\beta < H$.\\

Denote by $\Gamma$ the Gamma-function. Recall the left-sided Riemann-Liouville fractional integral operator $I^\beta _+$ of order $\beta > 0$:

$$
(I^\beta _{0+} f)(s) = \frac{1}{\Gamma (\beta)} \int _0^sf(u) (s-u)^{\beta -1} du .
$$

The corresponding right-sided fractional integral operator $I^\beta _- $ is defined by

$$
(I^\beta _{t-}f)(s) = \frac{1}{\Gamma (\beta) }\int _s^t f(u) (u-s)^{\beta -1} du .
$$

\begin{rmk}

If $ f \in  W^{\beta}_1 ([0,T])$, then its restriction to $[0,t] \subseteq [0,T]$ belongs to $I^{\beta}_{-}(L_\infty ([0,t]))$. Also, if  $ f \in  W^{\beta}_2 ([0,T])$, then its restriction to $[0,t] \subseteq [0,T]$ belongs to $I^{\beta}_{+}(L_1 ([0,t]))$, where $I^{\beta}_{-}(L_\infty ([0,t])) $ (resp. $I^{\beta}_{+}(L_1 ([0,t])) $) stand for the image of $ L_\infty ([0,t])$ (resp. $ L_1 ([0,t])$) by the fractional Riemann-Liouville operator $I^{\beta}_{-} $ (resp. $ I^{\beta}_{+}$).(For details we refer to \cite{s-k-m}).

\end{rmk}

\begin{defn}

Let $f:[0,T] \to \mathbb{R}$ and $0< \beta < 1$. If $ f \in I^{\beta}_{+}(L_{1} ([0,T]))$(resp. $f \in I^{\beta}_{-}(L_\infty ([0,T]))$ then the Riemann-Liouville fractional derivatives are defined using the Weyl representation as

\begin{multline*}
(D^{\beta}_{0+} f)(x)= \frac{1}{\Gamma(1-\beta)} \left( \frac{f(x)}{x^\beta} + \beta \int_{0}^{x}\frac{f(x) - f(y)}{(x-y)^{\beta + 1}}dy \right) \textbf{1} _{(0,T)}(x),\\
\left( \text{resp}.(D^{\beta}_{T^{-}} f)(x)= \frac{1}{\Gamma(1-\beta)} \left( \frac{f(x)}{(T-x)^\beta} + \beta \int_{x}^{T}\frac{f(x) - f(y)}{(y-x)^{\beta + 1}}dy \right) \textbf{1} _{(0,T)}(x)\right).
\end{multline*}

\end{defn}

For a detailed discussion, we refer to \cite{s-k-m}. The following proposition clarifies the construction of the stochastic integrals. This approach is by Nualart and  Rascanu.

\begin{prop}\cite{n-r}\label{pr:n-r}
Let $ f \in  W^{\beta}_2 ([0,T])$, $ g \in W^{1- \beta}_1 ([0,T])$. Then for any $t \in(0,T]$ the Lebesgue integral

\begin{center}
 $\int_{0}^{t} (D^{\beta}_{0+} f)(x) (D^{1- \beta}_{t-} g_{t-} )(x) dx$
\end{center}

exists, and we can define the \textit{generalized Lebesgue-Stieltjes integral} by

\begin{equation*}
\int_{0}^t f dg := \int_{0}^{t} (D^{\beta}_{0+} f)(x) (D^{1- \beta}_{t-} g_{t-} )(x) dx .
\end{equation*}

\end{prop}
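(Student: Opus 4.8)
The statement is the one of Nualart and Rascanu \cite{n-r}; the plan is to reprove it by showing directly that the product $(D^{\beta}_{0+}f)(x)\,(D^{1-\beta}_{t-}g_{t-})(x)$ lies in $L^1([0,t])$, so that the defining Lebesgue integral makes sense. The strategy is to pair the two factors as an $L^1$ function against an $L^\infty$ function: I would prove $D^{\beta}_{0+}f\in L^1([0,t])$ and $D^{1-\beta}_{t-}g_{t-}\in L^\infty([0,t])$, with norms controlled respectively by $\Vert f\Vert_{2,\beta}$ and $\Vert g\Vert_{1,1-\beta}$. That the two fractional derivatives are well defined (as functions independent of the chosen representative) is precisely the content of the Remark preceding the statement, which embeds the restriction of $f$ into $I^{\beta}_{+}(L_1([0,t]))$ and that of $g_{t-}$ into $I^{1-\beta}_{-}(L_\infty([0,t]))$; I would quote this, and then work with the explicit Weyl representations recalled above.

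For the $L^\infty$ bound, write $g_{t-}(s)=\big(g(s)-g(t-)\big)\mathbf{1}_{(0,t)}(s)$, so that $g_{t-}(x)-g_{t-}(y)=g(x)-g(y)$ and
\[
(D^{1-\beta}_{t-}g_{t-})(x)=\frac{1}{\Gamma(\beta)}\left(\frac{g(x)-g(t-)}{(t-x)^{1-\beta}}+(1-\beta)\int_x^t\frac{g(x)-g(y)}{(y-x)^{2-\beta}}\,\ud y\right),\qquad x\in(0,t).
\]
From the definition of $\Vert g\Vert_{1,1-\beta}$ one has $|g(x)-g(t-)|\le\Vert g\Vert_{1,1-\beta}(t-x)^{1-\beta}$, so the first term is at most $\Vert g\Vert_{1,1-\beta}$, while the integral is exactly of the form appearing in $\Vert g\Vert_{1,1-\beta}$ (with base point $s=x$ and exponent $1+(1-\beta)=2-\beta$) and is therefore also bounded by $\Vert g\Vert_{1,1-\beta}$. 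Hence $\Vert D^{1-\beta}_{t-}g_{t-}\Vert_{L^\infty([0,t])}\le\frac{2-\beta}{\Gamma(\beta)}\Vert g\Vert_{1,1-\beta}$, uniformly in $t\in(0,T]$.

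For the $L^1$ bound, integrate $|(D^{\beta}_{0+}f)(x)|$ over $x\in(0,t)$ and apply Tonelli's theorem to the resulting double integral:
\[
\Vert D^{\beta}_{0+}f\Vert_{L^1([0,t])}\le\frac{1}{\Gamma(1-\beta)}\left(\int_0^t\frac{|f(x)|}{x^\beta}\,\ud x+\beta\int_0^t\!\!\int_0^x\frac{|f(x)-f(y)|}{(x-y)^{1+\beta}}\,\ud y\,\ud x\right)\le\frac{1}{\Gamma(1-\beta)}\Vert f\Vert_{2,\beta},
\]
since the two inner integrals on the right are exactly the two terms defining $\Vert f\Vert_{2,\beta}$ restricted to $[0,t]\subseteq[0,T]$ and $\beta<1$.

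Combining the two estimates with the elementary $L^1$--$L^\infty$ pairing,
\[
\left|\int_0^t f\,\ud g\right|\le\Vert D^{\beta}_{0+}f\Vert_{L^1([0,t])}\,\Vert D^{1-\beta}_{t-}g_{t-}\Vert_{L^\infty([0,t])}\le\frac{2-\beta}{\Gamma(\beta)\Gamma(1-\beta)}\Vert f\Vert_{2,\beta}\Vert g\Vert_{1,1-\beta}<\infty,
\]
so the Lebesgue integral exists and the definition is justified. I expect the only genuinely delicate point to be the preliminary one --- that $f$ (resp.\ $g_{t-}$) restricted to $[0,t]$ really lies in $I^{\beta}_{+}(L_1)$ (resp.\ $I^{1-\beta}_{-}(L_\infty)$), so that the Weyl formulas above are legitimate and $\int_0^t f\,\ud g$ does not depend on the representative chosen --- which is where I would lean on \cite{s-k-m} and the cited Remark; once that is in place, everything else is bookkeeping with Tonelli's theorem. (If one additionally wants the generalized integral to coincide with the Riemann--Stieltjes integral whenever the latter exists, one invokes Z\"ahle's fractional integration-by-parts formula \cite{z}, but this is not needed for the statement as phrased.)
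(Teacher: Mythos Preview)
The paper does not actually prove this proposition: it is quoted from Nualart--R\u{a}\c{s}canu \cite{n-r} (as the citation in the statement indicates) and no argument is given in the text. Your proof is correct and is precisely the standard one: the Besov norms $\Vert\cdot\Vert_{2,\beta}$ and $\Vert\cdot\Vert_{1,1-\beta}$ are designed so that the Weyl representations of $D^{\beta}_{0+}f$ and $D^{1-\beta}_{t-}g_{t-}$ land in $L^1$ and $L^\infty$ respectively, and the $L^1$--$L^\infty$ pairing does the rest. Note that your argument in fact also yields the estimate stated as Theorem~\ref{t:n-r} (with an explicit constant), since your $L^\infty$ bound on $D^{1-\beta}_{t-}g_{t-}$ is exactly the supremum appearing there.
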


\begin{rmk}\label{rmk:coinside}
It is shown in \cite{z} that if $f \in C^{\gamma }([0,T]) $ and $g \in C^{\mu }([0,T])$ with $ \gamma + \mu > 1$, then the integral $ \int_{0}^{T} f dg $ exists 
in the sense of the generalized Lebesgue-Stieltjes integral and coincides with the Riemann-Stieltjes integral.
\end{rmk}

The next theorem can be used to study the continuity of the integral.

\begin{thm}\cite{n-r}\label{t:n-r}
Let $ f \in  W^{\beta}_2 [0,T]$ and  $ g \in W^{1- \beta}_1 [0,T]$. Then we have the estimation
\begin{equation}
\left|\int_{0}^t f dg \right| \le \sup_{0\le s < t \le 1} \big| D^{1 - \beta}_{t^{-}} g_{t^{-}}(s) \big| \Vert f \Vert _{2,\beta} ,\quad t\in[0,T].
\end{equation}
\end{thm}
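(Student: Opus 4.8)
The statement to prove is Theorem \ref{t:n-r}, the estimate
\[
\left|\int_0^t f\,dg\right| \le \sup_{0\le s<t\le 1}\bigl|D^{1-\beta}_{t^-}g_{t^-}(s)\bigr|\,\Vert f\Vert_{2,\beta},\qquad t\in[0,T],
\]
for $f\in W^\beta_2[0,T]$ and $g\in W^{1-\beta}_1[0,T]$. The plan is to start from the defining formula of Proposition \ref{pr:n-r}, namely
\[
\int_0^t f\,dg = \int_0^t (D^\beta_{0+}f)(x)\,(D^{1-\beta}_{t-}g_{t-})(x)\,dx,
\]
and to bound the right-hand side by pulling the supremum of $|D^{1-\beta}_{t-}g_{t-}|$ out of the integral. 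After that I would be left with $\sup_x|D^{1-\beta}_{t-}g_{t-}(x)|\int_0^t|(D^\beta_{0+}f)(x)|\,dx$, so the crux is to show $\int_0^t|(D^\beta_{0+}f)(x)|\,dx \le \Vert f\Vert_{2,\beta}$ (or is controlled by it with constant $1$, after absorbing the $\Gamma$-factors correctly).

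The key step is therefore to estimate the $L^1$-norm of the fractional derivative $D^\beta_{0+}f$ in terms of $\Vert f\Vert_{2,\beta}$. Using the Weyl representation from the definition above,
\[
(D^\beta_{0+}f)(x) = \frac{1}{\Gamma(1-\beta)}\left(\frac{f(x)}{x^\beta} + \beta\int_0^x \frac{f(x)-f(y)}{(x-y)^{\beta+1}}\,dy\right),
\]
I would apply the triangle inequality and integrate in $x$ over $[0,t]\subseteq[0,T]$:
\[
\int_0^t |(D^\beta_{0+}f)(x)|\,dx \le \frac{1}{\Gamma(1-\beta)}\left(\int_0^t \frac{|f(x)|}{x^\beta}\,dx + \beta\int_0^t\int_0^x \frac{|f(x)-f(y)|}{(x-y)^{\beta+1}}\,dy\,dx\right).
\]
The first term is exactly the first term in $\Vert f\Vert_{2,\beta}$, and the double integral (after renaming variables so that the inner variable is the smaller one, matching $\int_0^T\int_0^s \frac{|f(u)-f(s)|}{(u-s)^{1+\beta}}\,du\,ds$) is exactly the second term in $\Vert f\Vert_{2,\beta}$. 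Thus the bracket is at most $\Vert f\Vert_{2,\beta}$, and one checks that the normalization in Proposition \ref{pr:n-r} is chosen precisely so that the prefactors $1/\Gamma(1-\beta)$, $\beta$, and the corresponding $\Gamma(\beta)$ appearing in $D^{1-\beta}_{t-}g_{t-}$ combine to leave constant $1$; keeping track of these constant factors is the only real bookkeeping obstacle, and it is routine rather than deep.

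Combining the two pieces gives
\[
\left|\int_0^t f\,dg\right| \le \int_0^t |(D^\beta_{0+}f)(x)|\,|(D^{1-\beta}_{t-}g_{t-})(x)|\,dx \le \sup_{0\le s<t\le 1}\bigl|D^{1-\beta}_{t^-}g_{t^-}(s)\bigr|\int_0^t|(D^\beta_{0+}f)(x)|\,dx,
\]
and then the previous estimate finishes the proof. The only point requiring a little care is that $D^{1-\beta}_{t-}g_{t-}$ is genuinely bounded on $[0,t]$, which is guaranteed by the hypothesis $g\in W^{1-\beta}_1[0,T]$ together with the remark that functions in $W^{1-\beta}_1$ restrict to $I^{1-\beta}_{-}(L_\infty)$, so that $D^{1-\beta}_{t-}g_{t-}\in L_\infty([0,t])$; this is also what makes the integral in Proposition \ref{pr:n-r} well defined in the first place. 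I expect no serious obstacle beyond correctly matching the $\Gamma$-normalizations.
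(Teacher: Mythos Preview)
The paper does not supply its own proof of this theorem: the statement is quoted verbatim from Nualart--R\u{a}\c{s}canu (reference \cite{n-r} in the paper) and is used as a black box, so there is nothing in the paper to compare your argument against.

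Your outline is the standard one and is essentially correct. One small correction on the bookkeeping you flag at the end: the $\Gamma$-factors do \emph{not} combine or cancel between the two fractional derivatives. The factor $1/\Gamma(\beta)$ sitting inside $D^{1-\beta}_{t^-}g_{t^-}$ simply remains part of the supremum on the right-hand side; what you actually need is that
\[
\int_0^t |(D^\beta_{0+}f)(x)|\,dx \;\le\; \frac{1}{\Gamma(1-\beta)}\Bigl(\int_0^t \frac{|f(x)|}{x^\beta}\,dx + \beta\int_0^t\!\!\int_0^x \frac{|f(x)-f(y)|}{(x-y)^{\beta+1}}\,dy\,dx\Bigr) \;\le\; \Vert f\Vert_{2,\beta},
\]
and this holds because $0<\beta<1$ gives both $\beta\le 1$ and $\Gamma(1-\beta)\ge 1$ (recall $\Gamma$ is decreasing on $(0,1]$ with $\Gamma(1)=1$). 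So the constant $1$ in the stated inequality comes from these two elementary bounds, not from any interplay with the $\Gamma(\beta)$ in the other derivative. With that adjustment your argument goes through exactly as written.
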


Now we can state the existence of stochastic integral with respect to fractional Brownian motion in our main result.

\begin{thm}\cite{a-m-v}\label{thm:integral}
Let $f:\R \to \R $ be any convex function.\\

$(i)$ The stochastic integral
\begin{equation*}\label{eq:1}
\int _{0}^{1} f^{'}_{-}(B_t) dB_t
\end{equation*}
can be understood a.s. in the sense of the generalized Lebesgue-Stieltjes integral.\\

$(ii)$ The following Ito formula
\begin{equation*} \label{eq:Ito}
 f(B_1)= f(0) + \int_{0}^{1}f^{'}_{-}(B_t) dB_t
\end{equation*}
holds, where the stochastic integral is understood in the sense of the generalized Lebesgue-Stieltjes integral.\\

$(iii)$ One can approximate the stochastic integral by Riemann-Stieltjes sums. More precisely,
\begin{equation*}
\sum_{i=1}^{n} f^{'}_{-}  (B_{t^{n}_{i-1}}) (B_{t^{n}_i} - B_{t^{n}_{i-1}})     \stackrel{\text{a.s.}}{\longrightarrow} \int_{0}^{1} f^{'}_{-}  (B_{t}) dB_{t}, \quad t^{n}_i=\frac{i}{n}.
\end{equation*}
\end{thm}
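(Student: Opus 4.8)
The plan is to base everything on one regularity fact: that the integrand $t\mapsto f'_-(B_t)$ belongs a.s. to the Besov-type space $W^\beta_2([0,1])$ for a suitable $\beta\in(1-H,H)$, a range that is nonempty exactly because $H>\tfrac12$. I fix $\omega$ in the full-measure event on which $B$ has $\gamma$-Hölder paths for every $\gamma<H$ and on which $B$ admits a jointly continuous occupation density (local time) $\{L^x\}_{x\in\R}$ on $[0,1]$; both hold a.s. Since $B$ is bounded on $[0,1]$, I may replace $f'_-$ by its restriction to the compact range of $B$, on which it is bounded and nondecreasing, with $\mu=f''$ a finite measure there. To prove $(i)$, recall from Remark \ref{r:rmk1} that $B\in W^{1-\beta}_1$ a.s. whenever $\beta>1-H$, so by Proposition \ref{pr:n-r} it suffices to verify $f'_-(B_\cdot)\in W^\beta_2$. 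The first term of $\Vert\cdot\Vert_{2,\beta}$ is controlled because $f'_-(B_\cdot)$ is bounded. For the double integral I exploit monotonicity: $|f'_-(B_u)-f'_-(B_s)|=\int_\R\mathbf 1_{\{a\in I_{u,s}\}}\,\mu(\ud a)$, where $I_{u,s}$ is the interval with endpoints $B_u$ and $B_s$. Tonelli turns the double integral into $\int_\R J(a)\,\mu(\ud a)$ with $J(a)=\int_0^1\int_0^s (s-u)^{-1-\beta}\mathbf 1_{\{a\in I_{u,s}\}}\,\ud u\,\ud s$. Since $a\in I_{u,s}$ forces $|B_s-a|\le|B_s-B_u|\le K(s-u)^\gamma$, integrating first in $u$ gives $J(a)\lesssim\int_0^1|B_s-a|^{-\beta/\gamma}\,\ud s=\int_\R|x-a|^{-\beta/\gamma}L^x\,\ud x$, which is finite for $\beta/\gamma<1$; I secure this by taking $\gamma\in(\beta,H)$. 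A final Tonelli step bounds $\int_\R J(a)\,\mu(\ud a)$ by $\int_\R L^x\big(\int_\R|x-a|^{-\beta/\gamma}\mu(\ud a)\big)\ud x$, which is finite because $L$ is bounded and compactly supported, $\beta/\gamma<1$, and the atoms of $\mu$ form a Lebesgue-null set. This proves the existence in $(i)$.

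Both $(ii)$ and $(iii)$ follow by feeding $W^\beta_2$ integrands into the continuity estimate of Theorem \ref{t:n-r}, namely $\big|\int_0^1\psi\,\ud B\big|\le\sup_s\big|D^{1-\beta}_{1-}B_{1-}(s)\big|\,\Vert\psi\Vert_{2,\beta}$, whose prefactor is a.s. finite since $B\in W^{1-\beta}_1$. For the Itô formula $(ii)$ I mollify: let $f_m=f*\rho_m$ be smooth convex approximations. Then $f_m'(B_\cdot)$ and $B$ are both $\gamma$-Hölder with $2\gamma>1$, so by Remark \ref{rmk:coinside} the generalized integral is an ordinary Riemann--Stieltjes integral and the pathwise chain rule gives $f_m(B_1)-f_m(0)=\int_0^1 f_m'(B_t)\,\ud B_t$. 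As $m\to\infty$ the left side tends to $f(B_1)-f(0)$ by local uniform convergence, and by the estimate the right side converges once $\Vert f_m'(B_\cdot)-f'_-(B_\cdot)\Vert_{2,\beta}\to0$. This is dominated convergence: $f_m'\to f'_-$ at every continuity point of $f'_-$, and $B_t$ meets the countable discontinuity set of $f'_-$ only on a Lebesgue-null set of times because the occupation measure is absolutely continuous; the integrand is dominated uniformly in $m$ by the $\mu$-integrable majorant of the previous paragraph. This yields $(ii)$.

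For the Riemann-sum approximation $(iii)$, set $\eta_n(t)=\lfloor nt\rfloor/n$. The step function $f'_-(B_{\eta_n(\cdot)})$ has finitely many jumps, hence lies in $W^\beta_2$, and its generalized integral against $B$ is exactly the Riemann sum, so $S_n=\int_0^1 f'_-(B_{\eta_n(t)})\,\ud B_t$ and $S_n-S=\int_0^1\big[f'_-(B_{\eta_n(t)})-f'_-(B_t)\big]\,\ud B_t$. The continuity estimate then reduces the a.s. convergence $S_n\to S$ to $\Vert f'_-(B_{\eta_n(\cdot)})-f'_-(B_\cdot)\Vert_{2,\beta}\to0$. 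Since $\eta_n(t)\to t$ and $B$ is continuous, $f'_-(B_{\eta_n(t)})\to f'_-(B_t)$ at every $t$ where $B_t$ is a continuity point of $f'_-$, hence for a.e. $t$ by absolute continuity of the occupation measure; a domination argument parallel to the first paragraph, in which the time discretization is absorbed through $|\eta_n(u)-\eta_n(s)|\le|u-s|+n^{-1}$, supplies the uniform-in-$n$ majorant and closes the dominated convergence.

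The crux is the first paragraph: showing $f'_-(B_\cdot)\in W^\beta_2$ even though $f'_-$ is only nondecreasing and may be discontinuous. A priori the discontinuities of $f'_-$, composed with the oscillating path of $B$, could make the Besov double integral diverge; controlling it forces one to quantify the level crossings of $B$ through its Hölder modulus and, decisively, through the a.s. existence and boundedness of the fractional Brownian occupation density, after which a Fubini exchange integrates the bound against the merely locally finite measure $\mu$. Once this regularity and its $\mu$-integrable domination are established, $(ii)$ and $(iii)$ are uniform-parameter dominated-convergence corollaries of the single estimate in Theorem \ref{t:n-r}.
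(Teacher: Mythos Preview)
The paper does not prove Theorem~\ref{thm:integral}; it is quoted from \cite{a-m-v} as background, so there is no in-paper proof to compare against. Your argument for $(i)$---controlling the Besov double integral by the H\"older modulus together with the a.s.\ boundedness and compact support of the fractional Brownian occupation density---is correct and is in the spirit of the Besov-space framework the paper uses throughout. The mollification scheme for $(ii)$ is also sound, though the domination of the double-integral part of $\Vert\cdot\Vert_{2,\beta}$ needs one extra remark: for $m\ge m_0$ one has $|f'_m(B_s)-f'_m(B_u)|\le f'_-(B_s\vee B_u+1/m_0)-f'_-(B_s\wedge B_u-1/m_0)$, and then the occupation-density bound of $(i)$ applies with the enlarged interval to give an $m$-free majorant.

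There is, however, a genuine gap in your treatment of $(iii)$. Dominated convergence for the double-integral term of $\Vert\cdot\Vert_{2,\beta}$ is \emph{not} available in the way you sketch. The function $g_n(t)=f'_-(B_{\eta_n(t)})-f'_-(B_t)$ has jump discontinuities at the grid points $i/n$; for $s$ slightly above and $u$ slightly below such a point, $|g_n(s)-g_n(u)|$ tends to the generically nonzero constant $|f'_-(B_{(i-1)/n})-f'_-(B_{(i-2)/n})|$ while $(s-u)^{1+\beta}\to0$. Thus $|g_n(s)-g_n(u)|/(s-u)^{1+\beta}$ carries an integrable singularity whose \emph{location moves with $n$}, and no $n$-independent $L^1$ majorant on $\{0<u<s<1\}$ exists. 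The inequality $|\eta_n(u)-\eta_n(s)|\le|u-s|+n^{-1}$ does not help here: it gives no lower bound on $s-u$ in terms of the jump size. What is actually required---and what the present paper carries out in the proof of Theorem~\ref{main_theorem}, in $L^p$ rather than pathwise---is a direct estimate of $\Vert g_n\Vert_{2,\beta}$ obtained by splitting the double integral according to whether $s,u$ lie in the same, adjacent, or well-separated subintervals of the partition, and bounding each piece explicitly. For the a.s.\ statement you want, that computation has to be redone with the pathwise H\"older and local-time bounds of your first paragraph replacing the probabilistic Lemma~\ref{l:estimate}.
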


\subsection{ Some results related to fractional Brownian motion}

The so-called Garsia-Rademich-Rumsey inequality provides basic inequalities on increments of continuous stochastic processes. Using this inequality, one can obtain 
the following lemma on the moments of supremum of fractional derivative of fractional Brownian motion.

\begin{lma}\cite{n-r}\label{l:n-r}
Let $B = \{ B_t \}_{t \in [0,1]}$ be a fractional Brownian motion with Hurst parameter $H \in (\frac{1}{2},1)$. Let $1-H < \beta < \frac{1}{2}$ and $p\ge 1$, then
\begin{equation*}
\E \big( \sup_{0\le s < t \le 1} \big| D^{1 - \beta}_{t^{-}} B_{t^{-}}(s) \big| \ \big)^p < \infty.
\end{equation*}
\end{lma}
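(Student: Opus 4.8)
The plan is to prove Lemma \ref{l:n-r} via the Garsia--Rademich--Rumsey (GRR) inequality, which is how the statement is motivated in the text. First I would recall the GRR inequality in the form: if $\Psi$ is a convex increasing function with $\Psi(0)=0$ and $q$ is an increasing function with $q(0)=0$, and
\begin{equation*}
\int_0^1\int_0^1 \Psi\!\left(\frac{|g(x)-g(y)|}{q(|x-y|)}\right)\ud x\,\ud y \le \Gamma < \infty,
\end{equation*}
then for all $s,t$,
\begin{equation*}
|g(s)-g(t)| \le 8\int_0^{|s-t|}\Psi^{-1}\!\left(\frac{4\Gamma}{u^2}\right)q(\ud u).
\end{equation*}
Applied with $\Psi(x)=x^m$ (for large even $m$, or $m\ge 1$ after taking moments) and $q(u)=u^{\gamma}$ for a suitable H\"older exponent $\gamma\in(\beta,H)$, this yields a pathwise bound on $\|B\|_{\gamma}$ (the H\"older seminorm of the fBm path on $[0,1]$) of the form $\|B\|_\gamma \le c_{m,\gamma}\,\Gamma_{m,\gamma}^{1/m}$ with
\begin{equation*}
\Gamma_{m,\gamma} = \int_0^1\int_0^1 \frac{|B_x-B_y|^m}{|x-y|^{\gamma m+2}}\,\ud x\,\ud y,
\end{equation*}
and by Fubini together with the scaling $\E|B_x-B_y|^m = c_m|x-y|^{mH}$, one gets $\E\,\Gamma_{m,\gamma}<\infty$ as soon as $m(H-\gamma)-2 > -1$, i.e. $m > \frac{1}{H-\gamma}$. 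Hence for every $\gamma<H$ and every $p\ge1$ one has $\E\,\|B\|_\gamma^p<\infty$ (choose $m$ large).

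Next I would pass from the H\"older norm to the quantity actually appearing in the lemma, namely $\sup_{0\le s<t\le 1}|D^{1-\beta}_{t^-}B_{t^-}(s)|$. Writing out the Weyl representation of the fractional derivative,
\begin{equation*}
D^{1-\beta}_{t^-}B_{t^-}(s) = \frac{e^{-i\pi(1-\beta)}}{\Gamma(\beta)}\left(\frac{B_s - B_t}{(t-s)^{1-\beta}} + (1-\beta)\int_s^t \frac{B_s - B_y}{(y-s)^{2-\beta}}\,\ud y\right),
\end{equation*}
one bounds the first term by $\|B\|_\gamma (t-s)^{\gamma-(1-\beta)}\le \|B\|_\gamma$ whenever $\gamma\ge 1-\beta$, and the integral term by $\|B\|_\gamma\int_s^t (y-s)^{\gamma-2+\beta}\,\ud y = \frac{\|B\|_\gamma}{\gamma-1+\beta}(t-s)^{\gamma-1+\beta}$, which is also bounded on $[0,1]$ provided $\gamma > 1-\beta$. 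Since the hypothesis is $1-H<\beta<\frac12$, the interval $(1-\beta,H)$ is nonempty, so we may fix such a $\gamma$. Combining, there is a deterministic constant $c=c(\beta,\gamma)$ with
\begin{equation*}
\sup_{0\le s<t\le 1}\big|D^{1-\beta}_{t^-}B_{t^-}(s)\big| \le c\,\|B\|_\gamma,
\end{equation*}
and taking $p$-th moments and invoking the previous paragraph finishes the proof.

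The main obstacle, and the only place requiring care, is the interchange of the roles of the exponents: one must verify that the exponents $\gamma-(1-\beta)$ and $\gamma-1+\beta$ governing the two pieces of the Weyl derivative are genuinely nonnegative (indeed positive for the integral term) so that all the powers of $(t-s)$ stay bounded on the unit interval, and simultaneously that $\gamma$ can be taken $<H$ so that the GRR moment bound $\E\,\Gamma_{m,\gamma}<\infty$ holds after choosing $m$ large enough. This is exactly where the constraint $1-H<\beta<\frac12$ is used — it is precisely the condition that $(1-\beta, H)$ is a nonempty interval from which $\gamma$ may be drawn. Everything else — Fubini, the Gaussian moment identity $\E|B_x-B_y|^m=c_m|x-y|^{mH}$, and the convexity bookkeeping in GRR — is routine. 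I should also note that this is essentially the argument in Nualart--Rascanu \cite{n-r}, so in the write-up I would state the GRR inequality, carry out the two-term estimate of the Weyl derivative explicitly, and cite \cite{n-r} for the GRR-based moment bound on $\|B\|_\gamma$.
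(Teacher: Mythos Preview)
Your proposal is correct and follows precisely the route the paper indicates: the paper does not give its own proof of this lemma but cites \cite{n-r} and remarks that it is obtained via the Garsia--Rademich--Rumsey inequality, which is exactly what you carry out. The two-step argument --- GRR to get $\E\|B\|_\gamma^p<\infty$ for any $\gamma<H$, then the pointwise bound $\sup_{s<t}|D^{1-\beta}_{t^-}B_{t^-}(s)|\le c(\beta,\gamma)\|B\|_\gamma$ for any $\gamma\in(1-\beta,H)$ --- is the standard Nualart--Rascanu proof, so there is nothing to add beyond noting that the phase factor $e^{-i\pi(1-\beta)}$ in your Weyl formula is a convention not used in the paper's Definition~3.2 and is in any case irrelevant under the modulus.
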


We continue with an useful estimate of a probability that fractional Brownian motion crosses a fixed level. It turns out that this is a main 
ingredient for the proof of the main theorem. Actually the following result is an improvement of the Lemma 4 $($see \cite{c-n-t}$)$ with a better constant in terms of the 
level $a$.

\begin{lma}\label{l:estimate}
Let $B = \{ B_t \}_{t \in [0,1]}$ be a fractional Brownian motion with Hurst parameter $H \in \left(\frac{1}{2},1\right)$. Fix $0 < s < t \le 1$ and $ a \in \R$. 
Then there exists a constant $C$, independent of $s$, $t$ and $a$, such that the following 
estimate
\begin{equation*}
\P \big( B_t > a \ \text{and} \ B_s < a \big) \le C \  C(a) (t-s)^{H} s^{-2H}
\end{equation*}
holds.
\end{lma}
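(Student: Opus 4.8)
The plan is to estimate the probability $\P(B_t > a, B_s < a)$ by conditioning on the value of $B_s$ and using the Gaussian structure of the pair $(B_s, B_t)$. First I would observe that, since $B_s$ and $B_t - B_s$ are jointly Gaussian, we may write the conditional law of $B_t$ given $B_s = x$ as a Gaussian with mean $\rho(s,t)\, x$ (where $\rho(s,t) = \E[B_s B_t]/\E[B_s^2]$ is the regression coefficient) and some conditional variance $\sigma^2(s,t)$. On the event $B_s < a$, contributing values $x < a$, the conditional probability $\P(B_t > a \mid B_s = x)$ is the Gaussian tail $\Phi\!\big((\rho x - a)/\sigma\big)$ evaluated at a point that is negative when $x$ is not too large, so one gets exponential decay in the relevant regime. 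Integrating the density of $B_s$ (which is $\mathcal N(0, s^{2H})$) against this conditional tail will produce the factor $s^{-2H}$ together with the Gaussian weight $e^{-\min\{a^2,(a-1)^2\}/2}$ and the polynomial prefactor $\max(1,|a|)$ that together make up $C(a)$.

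The key computational step is to control the covariance quantities. For fractional Brownian motion with $H \in (\tfrac12,1)$, one has $\E[(B_t - B_s)^2] = (t-s)^{2H}$ and, crucially, the increments are positively correlated, which means $\E[B_s(B_t - B_s)] \ge 0$; this positivity is what lets us bound the conditional variance $\sigma^2(s,t) = \mathrm{Var}(B_t \mid B_s)$ from above in a way that produces the clean $(t-s)^H$ power rather than $(t-s)^{2H}$ — the square root appears because after the change of variables normalizing the Gaussian, the length of the favourable $x$-interval scales like $\sigma \asymp (t-s)^H$ up to lower-order terms, while the exponential factor is essentially bounded. I would split the integral over $x$ into the region near $x = a$ (of width $\asymp (t-s)^H$, where the tail probability is $O(1)$ but the Gaussian density of $B_s$ contributes $e^{-a^2/(2s^{2H})} \le e^{-a^2/2}$ for $s \le 1$, or the shifted version giving the $(a-1)^2$ alternative) and the region $x \le a - c(t-s)^H$, where the conditional tail decays like $e^{-(a - \rho x)^2/(2\sigma^2)}$ and can be integrated to something negligible.

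The main obstacle I anticipate is getting the precise constant $C(a) = \max(1,|a|)\, e^{-\min\{a^2,(a-1)^2\}/2}$ rather than a cruder bound, since this is explicitly billed as an improvement over Lemma 4 of \cite{c-n-t}. The subtlety is that the naive bound centred at $a = 0$ gives $e^{-a^2/(2s^{2H})}$, but when $s$ is close to $1$ and $a$ is close to $1$ this is wasteful; the two-term minimum reflects optimizing the conditioning point (either near $B_s \approx 0$ or near $B_s \approx a$, whichever Gaussian cost is smaller), and one must track both candidate decompositions and take whichever is better at the end. I would handle this by treating the cases $|a| \le$ (a fixed threshold) and $|a|$ large separately: for bounded $a$ the factor $C(a)$ is comparable to a constant and a rough estimate suffices, while for large $|a|$ the dominant decay is $e^{-a^2/2}$ and the $\max(1,|a|)$ polynomial factor absorbs any stray powers of $a$ coming from the Gaussian integral $\int x\, e^{-x^2/2}\,dx$-type terms. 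Uniformity in $s,t$ is then automatic because all the $s$- and $t$-dependence has been isolated into the explicit powers $(t-s)^H s^{-2H}$.
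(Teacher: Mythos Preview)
Your overall plan --- condition on $B_s$, use the Gaussian regression $B_t = \rho\, B_s + \sigma Y$ with $\rho = R(t,s)/R(s,s)$ and $\sigma^2 = R(t,t) - R(t,s)^2/R(s,s)$, and split the $x$-integral --- is exactly the paper's approach. However, the split you propose has a genuine gap.

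You split at $x = a - c(t-s)^H$ and claim that for $x \le a - c(t-s)^H$ the conditional tail behaves like $e^{-(a-\rho x)^2/(2\sigma^2)}$. This is only valid when $a - \rho x > 0$, i.e.\ when $x < a/\rho$. Since $H>\tfrac12$ forces $\rho > 1$, for $a>0$ the interval $(a/\rho,\, a)$ is nonempty and on it $\rho x > a$, so the conditional probability $\P(B_t>a\mid B_s=x)$ exceeds $\tfrac12$ rather than being exponentially small. The width of this ``regression'' interval is $a(1-1/\rho)$, which for large $a$ can far exceed $(t-s)^H$; your split therefore leaves an uncontrolled region $(a/\rho,\, a - c(t-s)^H)$.

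The paper handles this differently. It splits at $x = \tfrac{R(s,s)}{R(t,s)}(a-1)$, i.e.\ at roughly $a-1$ (a window of width $\approx 1$, not $(t-s)^H$), and then after Tonelli the near-$a$ piece produces a term proportional to $a\bigl(1 - \tfrac{R(s,s)}{R(t,s)}\bigr)$. The key technical ingredient you are missing is the auxiliary estimate
\[
1 - \frac{R(s,s)}{R(t,s)} \;\le\; C\,\frac{(t-s)^H}{s^{H}},
\]
proved separately in the paper. This inequality is exactly what converts the width of the regression interval into the form $|a|\,(t-s)^H s^{-H}$; combined with the $s^{-H}$ from the density normalisation it yields the $s^{-2H}$ in the statement and simultaneously produces the factor $\max(1,|a|)$ in $C(a)$. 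Your sketch asserts the $s^{-2H}$ will appear but never identifies a mechanism for the second $s^{-H}$; this lemma is that mechanism.

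Finally, your reading of the $\min\{a^2,(a-1)^2\}$ is off: it is not a choice between two ``conditioning points'' but simply the uniform lower bound for $x^2$ over the interval $x\in[\,\tfrac{R(s,s)}{R(t,s)}(a-1),\,a\,]$ (using $s\le 1$ so that $x^2/s^{2H}\ge x^2$), which is why the window of width $1$ --- not $(t-s)^H$ --- is the natural split.
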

\begin{lma}\label{l:estimate_bm}
Let $W = \{ W_t \}_{t \in [0,1]}$ be a standard Brownian motion. Fix $0 < s < t \le 1$ and $ a \in \R$. Then there exists a constant $C$, independent of $s$, $t$ and 
$a$, such that the following 
estimate
\begin{equation*}
\P \big( W_t > a \ \text{and} \ W_s < a \big) \le C e^{-\frac{\min\{ a^2,(a-1)^2 \} }{2}} \sqrt{\frac{t-s}{s}}
\end{equation*}
holds.
\end{lma}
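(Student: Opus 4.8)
The plan is to reduce the joint event to a simple computation with the bivariate Gaussian density of $(W_s, W_t)$. First I would split according to whether the level $a$ is ``small'' or ``large'' relative to the typical size of $W$ on $[0,1]$; concretely, treat the cases $|a| \le 1$ and $|a| > 1$ separately, since the exponential factor $e^{-\min\{a^2,(a-1)^2\}/2}$ is of order $1$ in the first case and decays in the second. In the bounded case $|a|\le 1$ the claimed bound is just $\P(W_t > a, W_s < a) \le C\sqrt{(t-s)/s}$ (up to adjusting $C$), so it suffices to show that the probability that the Brownian path crosses the level $a$ between times $s$ and $t$ is $O(\sqrt{(t-s)/s})$. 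This I would get by conditioning on $W_s$: on the event $W_s < a$ we need $W_t - W_s > a - W_s > 0$, and since $W_t - W_s \sim N(0, t-s)$ is independent of $W_s$, integrating the Gaussian tail $\P(W_t - W_s > x) \le C\sqrt{t-s}/x$ (for $x>0$) against the density of $a - W_s$ near $0$ produces the factor $\sqrt{(t-s)/s}$ — here the $s^{-1/2}$ comes from the density of $W_s$ at height $a$ being at worst $O(s^{-1/2})$, but one must be careful because $1/x$ is not integrable at $0$; instead I would use the cruder bound $\P(W_t - W_s > x) \le \min\{1, C\sqrt{t-s}/x\}$ and integrate, which gives a logarithmic-free $\sqrt{(t-s)/s}$ after splitting the $x$-integral at $x = \sqrt{t-s}$.

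For the large-level case $|a| > 1$ the point is to recover the Gaussian factor $e^{-\min\{a^2,(a-1)^2\}/2}$. The simplest route is to note $\P(W_t > a, W_s < a) \le \P(W_t > a) \wedge \P(W_s < a)$ and also $\le \P(W_s < a < W_t)$. If $a > 1 > 0$ then on this event $W_s < a$ forces nothing, but $W_t > a$ with $W_s$ possibly close to $a$; I would instead dominate by $\P(W_s < a, W_t - W_s > a - W_s)$ and bound the conditional probability as above, now using that when $W_s < a$ and $a$ is large positive, the ``distance to cross'' $a - W_s$ is typically large unless $W_s$ itself is large, and $\P(W_s \ge a - 1) \le e^{-(a-1)^2/(2s)} \le e^{-(a-1)^2/2}$ (since $s \le 1$); on the complementary region $W_s < a-1$ we have $a - W_s > 1$ and hence $\P(W_t - W_s > a - W_s \mid W_s) \le \P(W_t - W_s > 1) \le Ce^{-1/(2(t-s))}$, which is harmless, but this does not yet give the $\sqrt{(t-s)/s}$. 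So more cleanly: write the probability as a double integral of the Gaussian density $p_s(x)p_{t-s}(y)$ over $\{x < a,\ x+y > a\}$, substitute $x = a - u$ ($u>0$), $y > u$, and bound $p_s(a-u) \le p_s(\text{argmin}) \cdot e^{(\text{cross terms})}$; factoring out $e^{-\min\{a^2,(a-1)^2\}/2}$ — which dominates $\sup_{0\le u\le 1} p_s(a-u)\sqrt{2\pi s}$ up to constants when $|a|>1$, since then either $a-u$ stays $\ge$ something like $|a|/2$ in absolute value or we are in a regime controlled by $(a-1)^2$ — reduces the remaining integral to exactly the bounded-level computation, yielding the extra $\sqrt{(t-s)/s}$.

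The main obstacle I anticipate is getting the constant in front to be genuinely \emph{independent of $a$}: one must check that $\sup_{u\in[0,1]} \sqrt{s}\,p_s(a-u) \le C\, e^{-\min\{a^2,(a-1)^2\}/2}$ uniformly in $s\in(0,1]$ and $a\in\R$. For $u$ ranging over $[0,1]$, $(a-u)^2 \ge \min\{a^2,(a-1)^2\}$ when $a\notin[0,1]$, and for $a\in[0,1]$ the left side is bounded by $C/\sqrt{s}$ which is \emph{not} uniformly bounded — this is precisely why the bounded-level case has to be handled by the conditioning argument rather than by pointwise domination of the density, and why the two cases genuinely need different treatments. Once that uniform density bound is established for $|a|>1$ and the conditioning argument is run for $|a|\le 1$, combining the two cases and relabeling the constant finishes the proof. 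I would also remark that this lemma is the $H=\tfrac12$ analogue of Lemma \ref{l:estimate}, and in fact the proof is strictly easier because of the independence of increments, which is unavailable for fractional Brownian motion.
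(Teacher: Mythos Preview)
Your conditioning-on-$W_s$ approach is exactly the paper's strategy: the paper proves Lemma~\ref{l:estimate_bm} by specializing the fractional proof of Lemma~\ref{l:estimate} to $H=\tfrac12$, where the decomposition $B_t=\frac{R(t,s)}{R(s,s)}B_s+\sigma Y$ becomes simply $W_t=W_s+\sqrt{t-s}\,Y$ (since $R(s,s)=R(t,s)=s$). The paper's primary split, however, is not by the size of $|a|$ but by the value of $x=W_s$: it treats $x<a-1$ (the term $I_1$) separately from $x\in[a-1,a]$ (the term $I_2$). Your split by $|a|\le 1$ versus $|a|>1$ is a legitimate reorganization, but as written it leaves two holes.

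First, in the bounded-level case your ``cruder bound'' $\P(W_t-W_s>x)\le\min\{1,C\sqrt{t-s}/x\}$ does \emph{not} give a logarithm-free estimate: integrating $\frac{1}{\sqrt{s}}\cdot\frac{\sqrt{t-s}}{x}$ over $x>\sqrt{t-s}$ diverges, because you have dropped the Gaussian factor $e^{-x^2/(2(t-s))}$ from the tail bound. The fix is immediate---just keep the full tail, or observe directly that $\int_0^\infty \P(N(0,t-s)>u)\,\ud u=\sqrt{(t-s)/(2\pi)}$ and bound the density $p_s(a-u)\le(2\pi s)^{-1/2}$---but the sentence as written is wrong.

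Second, and more seriously, in the large-$|a|$ case your argument only controls the region $u\in[0,1]$ (i.e.\ $W_s\in[a-1,a]$). For $u>1$ you invoke $\P(W_t-W_s>1)\le Ce^{-1/(2(t-s))}$, but this bound carries no decay in $a$, so it cannot dominate $Ce^{-\min\{a^2,(a-1)^2\}/2}\sqrt{(t-s)/s}$ uniformly as $|a|\to\infty$. This is precisely where the paper's $I_1$ computation is needed: one writes the joint exponent as
\[
\frac{(a-u)^2}{2s}+\frac{u^2}{2(t-s)}=\frac{a^2}{2t}+\frac{t}{2s(t-s)}\Bigl(u-\tfrac{a(t-s)}{t}\Bigr)^2\ge \frac{a^2}{2},
\]
which extracts the factor $e^{-a^2/2}$ and leaves a Gaussian integral in $u$ that produces the $\sqrt{(t-s)/s}$. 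Without this completion of the square your plan does not close.
\smallskip

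In short: the skeleton is right and matches the paper, but you must (i) keep the Gaussian tail in the small-$|a|$ computation and (ii) complete the square to recover $e^{-a^2/2}$ on the region $W_s<a-1$.
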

The proof of the lemmas are given in Appendix A. We also use the following well-known estimate for the tail probability of standard normal random variable.
\begin{lma}
\label{standard_estimate}
Let $Z$ be a standard normal random variable and fix $a>0$. Then
\begin{equation}
\P\big(Z>a\big) \leq \frac{1}{\sqrt{2\pi}a}e^{-\frac{a^2}{2}}.
\end{equation}
\end{lma}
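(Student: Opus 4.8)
The plan is to reduce the statement to an elementary one-variable integral estimate. Writing $Z$ for the standard normal random variable, for $a>0$ one has $\P(Z>a)=\tfrac{1}{\sqrt{2\pi}}\int_a^\infty e^{-x^2/2}\,\ud x$, and the whole content of the lemma is a sharp bound on this integral. The key observation is that on the range of integration $x\ge a>0$, so $x/a\ge 1$, and therefore multiplying the integrand by $x/a$ can only enlarge it.

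First I would record the identity $\P(Z>a)=\tfrac{1}{\sqrt{2\pi}}\int_a^\infty e^{-x^2/2}\,\ud x$ and then estimate
\[
\int_a^\infty e^{-x^2/2}\,\ud x \;\le\; \int_a^\infty \frac{x}{a}\,e^{-x^2/2}\,\ud x \;=\; \frac{1}{a}\Big[-e^{-x^2/2}\Big]_{x=a}^{x=\infty} \;=\; \frac{1}{a}\,e^{-a^2/2},
\]
using that $-e^{-x^2/2}$ is an antiderivative of $x\,e^{-x^2/2}$ and that $e^{-x^2/2}\to 0$ as $x\to\infty$. Combining this with the previous identity yields $\P(Z>a)\le \tfrac{1}{\sqrt{2\pi}\,a}\,e^{-a^2/2}$, which is exactly the asserted inequality.

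There is really no serious obstacle here: this is the classical Mills-ratio upper bound for the Gaussian tail, and it is invoked in the excerpt only as an auxiliary tool for the level-crossing estimates of Lemmas \ref{l:estimate} and \ref{l:estimate_bm}. The single point deserving a word of care is the hypothesis $a>0$: it is exactly what legitimises the inequality $1\le x/a$ on $[a,\infty)$, and without it the claimed bound fails (its right-hand side would be negative or undefined), so the assumption is used once and cannot be dropped.
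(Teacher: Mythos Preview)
Your proof is correct: this is exactly the standard Mills-ratio argument. The paper itself does not supply a proof of this lemma at all --- it simply records the inequality as ``the following well-known estimate'' --- so your write-up goes beyond what the authors provide.
\bigskip
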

\section{Proofs}
We start with the following simple lemma. It turns out that it provides enough good upper bound.

\begin{lma}
\label{lemma_fundamental}
Let $n\geq 2$ and $\alpha\in(0,1)$. Then
\begin{equation*}
\sum_{i=1}^{n-1}\left(\frac{1}{i}\right)^\alpha \leq \frac{1}{1-\alpha}n^{1-\alpha}. 
\end{equation*}
\end{lma}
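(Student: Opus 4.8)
The plan is to bound the sum by an integral via monotonicity. Since $\alpha\in(0,1)$, the function $\varphi(x)=x^{-\alpha}$ is positive and strictly decreasing on $(0,\infty)$, and it is integrable near the origin: $\int_0^c x^{-\alpha}\,dx=\frac{c^{1-\alpha}}{1-\alpha}<\infty$ for every $c>0$. This finiteness is precisely where the hypothesis $\alpha<1$ is used, and it is the only point that requires a moment's attention.

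First I would observe that for each integer $i\ge 1$ and every $x\in[i-1,i]$ one has $x^{-\alpha}\ge i^{-\alpha}$ by monotonicity, hence
\[
\left(\frac{1}{i}\right)^\alpha=\int_{i-1}^{i} i^{-\alpha}\,dx\le \int_{i-1}^{i} x^{-\alpha}\,dx .
\]
Summing this over $i=1,\dots,n-1$ and using additivity of the integral over the adjacent intervals $[0,1],[1,2],\dots,[n-2,n-1]$ gives
\[
\sum_{i=1}^{n-1}\left(\frac{1}{i}\right)^\alpha\le \int_{0}^{n-1} x^{-\alpha}\,dx=\frac{(n-1)^{1-\alpha}}{1-\alpha}.
\]
Finally, since $1-\alpha>0$ and $n-1\le n$, we have $(n-1)^{1-\alpha}\le n^{1-\alpha}$, which yields $\sum_{i=1}^{n-1}i^{-\alpha}\le \frac{1}{1-\alpha}n^{1-\alpha}$, as claimed.

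There is essentially no obstacle in this argument; it is a routine integral-comparison estimate. An alternative route would be a direct induction on $n$ (checking $n=2$ and then using $n^{1-\alpha}-(n-1)^{1-\alpha}\ge \frac{1-\alpha}{n^\alpha}$, which again reduces to a one-line mean value estimate), but the integral comparison above is shorter and more transparent, so that is the version I would write.
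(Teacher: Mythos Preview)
Your proof is correct; the integral comparison bound $\sum_{i=1}^{n-1} i^{-\alpha}\le \int_0^{n-1}x^{-\alpha}\,dx=\frac{(n-1)^{1-\alpha}}{1-\alpha}\le \frac{n^{1-\alpha}}{1-\alpha}$ is exactly the standard argument, and your write-up is clean. The paper does not give a proof of this lemma at all---it states it as a simple lemma and moves on---so there is nothing to compare against; your version would serve perfectly well as the omitted justification.
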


\begin{proof}[Proof of theorem~\ref{main_theorem}]
Throughout the proof all constants will be denoted by $C$, and their values may differ from line to line. Random constants will be denoted by $C(\omega)$. We prove 
the statement only for $r=1$. The general case follows by similar arguments (see Remark \ref{remark_general_r}). Note that
$$
S_n -S=\int_0^1 h_n(t)\ud B_t
$$
where
\begin{equation}
\label{h_n_def}
h_n(t) = \sum_{i=1}^n \left(f'_-(B_{\frac{i-1}{i}}) - f'_-(B_t)\right) \textbf{1}_{\left(\frac{i-1}{n},\frac{i}{n}\right]}(t).
\end{equation}
By Theorem \ref{t:n-r} and Lemma \ref{l:n-r}, there exists a random variable $C(\omega,H,\beta)$ for which all the moments exists and
\begin{equation}
\label{Nualart_1}
|S_n-S| \leq C(\omega,H,\beta)\Vert h_n\Vert_{2,\beta}
\end{equation}
for every $\beta\in(1-H,\frac{1}{2})$.
Thus by H\"{o}lder inequality, we obtain
\begin{equation}
\label{Holder}
\E|S_n-S|\leq C(H,\beta,p) \left[\E\Vert h_n\Vert_{2,\beta}^p\right]^{\frac{1}{p}}.
\end{equation}
Let now $p$ be as in (\ref{range_p}) and let $\beta\in\left(1-H,\frac{H}{p}\right)$. We proceed to compute the term $\left[\E\Vert h_n\Vert_{2,\beta}^{p}\right]^{\frac{1}{p}}$. We have
$$
\left[\E\Vert h_n\Vert_{2,\beta}^{p}\right]^{\frac{1}{p}} \leq
\left(\E J_{n}^p\right)^{\frac{1}{p}} +\left(\E I_{n}^p\right)^{\frac{1}{p}}, 
$$
where $J_n$ denotes the first term and $I_n$ the second term in the Besov norm $\Vert\cdot\Vert_{2,\beta}$. The rest of the proof is split 
into three steps. We first prove the statement for a convex function $f(x)=(x-a)^+$, where $\ a \in \R$. Next we prove the statement for convex functions for which the measure
$\mu$ has compact support. Finally, we prove the result for convex functions for which the assumption $\mathbf{(H_1)}$ holds. 
\begin{enumerate}
\item[\textbf{Step 1.}]
The case $f(x)=(x-a)^+$.
\end{enumerate}
Now we have
\begin{equation}
\label{h_n_simple}
h^{a}_n(t) = \sum_{i=1}^n \left(\textbf{1}_{ \{ B_t< a <B_{\frac{i-1}{i}}\} }-\textbf{1}_{ \{ B_{\frac{i-1}{i}}< a <B_t \} }\right)\textbf{1}_{\left(\frac{i-1}{n},\frac{i}{n}\right]}(t).
\end{equation}

For the term $J_{n}$, we use Minkowski inequality for integrals to obtain 
\begin{equation*}
\begin{split}
\left(\E J_{n}^p\right)^{\frac{1}{p}} 
&\leq  \int_0^1 \frac{\left(\E|h^{a}_{n}(t)|^p\right)^{\frac{1}{p}}}{t^\beta}\ud t\\
&\leq  \int_0^{\frac{1}{n}}\frac{ \P (B_t> |a|) ^{1/p} }{t^\beta}\ud t \\
&+ \sum_{i=2}^n\int_{\frac{i-1}{n}}^{\frac{i}{n}}
\frac{\P (B_t>a>B_{\frac{i-1}{n}}) ^{1/p} +\P(B_t<a<B_{\frac{i-1}{n}}) ^{1/p}}{t^\beta}\ud t\\
& := J_{n,1}+J_{n,2}.
\end{split}
\end{equation*}

In $J_{n,1}$, the probability can be 
estimated by one, if $|a|\leq 1$, and by estimate $3.2$ if $|a|>1$. Hence the term $J_{n,1}$ can be bounded as
$$
J_{n,1} \leq C\int_0^{\frac{1}{n}}\frac{1}{t^\beta}\ud t
\leq  C\left(\frac{1}{n}\right)^{1-\beta}. 
$$

For the term $J_{n,2}$, by symmetric property of fractional Brownian motion, it is sufficient to consider only the event $\{B_t < a < B_{\frac{i-1}{n}}\}$. Therefore, $J_{n,2}$ can be bounded using Lemma \ref{l:estimate} as
\begin{equation*}
\begin{split}
J_{n,2} & \leq C \sum_{i=2}^n\int_{\frac{i-1}{n}}^{\frac{i}{n}}
\frac{\left(t-\frac{i-1}{n}\right)^{H/p}\left(\frac{i-1}{n}\right)^{-2H/p}}{t^\beta}\ud t\\
&\leq  C\sum_{i=2}^n\left(\frac{1}{n}\right)^{H/p}\left(\frac{i-1}{n}\right)^{-2H/p}\int_{\frac{i-1}{n}}^{\frac{i}{n}}
\frac{1}{t^\beta}\ud t\\
&\leq C\left(\frac{1}{n}\right)^{1-\frac{H}{p}-\beta}\sum_{i=2}^n \left(\frac{1}{i-1}\right)^{\frac{2H}{p}+\beta}.
\end{split}
\end{equation*}
Together with lemma \ref{lemma_fundamental}, this implies that
\begin{equation*}
J_{n,2} \le
\begin{cases}
 C \left(\frac{1}{n}\right)^{\frac{H}{p}} & \text{ if $\alpha \in (0,1)$},\\
C \left(\frac{1}{n}\right)^{1-\beta - \frac{H}{p}} & \text{ if $\alpha > 1$},
\end{cases}
\end{equation*}

where $\alpha= \frac{2H}{p} + \beta$.

We proceed to study the term $I_n$. We split the integral into several 
parts. Particularly, we consider the cases when $s$ and $t$ lie in the same interval and when they lie in different intervals. Note that when $s\in\left(\frac{j-1}{n},\frac{j}{n}\right]$ and $t\in\left(\frac{i-1}{n},\frac{i}{n}\right]$ with $i\neq j$, we have
\begin{equation*}
\begin{split}
&|h^{a}_n(t)-h^{a}_n(s)|\\
&=|\textbf{1}_{\{ B_t< a < B_{\frac{i-1}{n}} \} }-\textbf{1}_{ \{ B_t> a >B_{\frac{i-1}{n}} \} }
-\textbf{1}_{ \{ B_s< a < B_{\frac{j-1}{n}} \} }+\textbf{1}_{ \{ B_s> a > B_{\frac{j-1}{n}} \} }|\\
&\leq |\textbf{1}_{ \{ B_t< a < B_{\frac{i-1}{n}}\} } - \textbf{1}_{ \{B_s< a < B_{\frac{j-1}{n}} \} }|+ |\textbf{1}_{ \{ B_s>a>B_{\frac{j-1}{n}}\} } - \textbf{1}_{ \{ B_t>a>B_{\frac{i-1}{n}} \} }|\\
& := H_1(j,i) + H_2(j,i).
\end{split}
\end{equation*}

Using Minkowski inequality for integrals, we have
\begin{equation*}
\begin{split}
\left(\E I_{n}^p\right)^{\frac{1}{p}} & \leq  \int_0^1\int_0^t \frac{\left(\E|h^{a}_n(t)-h^{a}_n(s)|^p\right)^{\frac{1}{p}}}{(t-s)^{\beta+1}}\ud s\ud t\\
&\leq  \int_0^{\frac{1}{n}}\int_0^t\frac{\P(B_t> a >B_s)^{1/p}+\P(B_t< a <B_s)^{1/p}}{(t-s)^{\beta+1}}\ud s\ud t \\
&+ \sum_{i=2}^n\int_{\frac{i-1}{n}}^{\frac{i}{n}}\int_{\frac{i-1}{n}}^t
\frac{\P(B_t> a > B_s)^{1/p}+\P(B_t< a <B_s)^{1/p}}{(t-s)^{\beta+1}}\ud s\ud t \\
&+ \sum_{i=2}^n\sum_{j=1}^{i-1}\int_{\frac{i-1}{n}}^{\frac{i}{n}}\int_{\frac{j-1}{n}}^{\frac{j}{n}}\frac{\left(\E H_1^p(j,i)\right)^{\frac{1}{p}}+\left(\E H_2^p(j,i)\right)^{\frac{1}{p}}}{(t-s)^{\beta+1}}\ud s \ud t\\
& := I_{n,1} + I_{n,2} + I_{n,3}.
\end{split}
\end{equation*}

We start with $I_{n,3}$. Note that it is enough to consider only the term $H_1(j,i)$. The term $H_2(j,i)$ can be
treated similarly. We have

\begin{equation*}
\left(\E H_1^p(j,i)\right)^{\frac{1}{p}}
\leq 2 \P\left(B_s< a < B_{\frac{j-1}{n}}\right)^{1/p}
+ 2 \P\left( B_t< a < B_{\frac{i-1}{n}}\right)^{1/p}.
\end{equation*}

Hence the term $I_{n,3}$ can be bounded as
\begin{equation*}
\label{I_n3}
\begin{split}
I_{n,3}&\leq  C\sum_{i=2}^n\sum_{j=1}^{i-1}\int_{\frac{i-1}{n}}^{\frac{i}{n}}\int_{\frac{j-1}{n}}^{\frac{j}{n}}\frac{\P\left(B_s< a <B_{\frac{j-1}{n}}\right)^{1/p}
+ \P\left(B_t< a <B_{\frac{i-1}{n}}\right)^{1/p}}{(t-s)^{\beta+1}}\ud s \ud t\\
&= C\sum_{i=2}^n\sum_{j=1}^{i-1}\int_{\frac{i-1}{n}}^{\frac{i}{n}}\int_{\frac{j-1}{n}}^{\frac{j}{n}}\frac{\P\left(B_s< a <B_{\frac{j-1}{n}}\right)^{1/p}
}{(t-s)^{\beta+1}}\ud s \ud t\\
&+ C\sum_{i=2}^n\sum_{j=1}^{i-1}\int_{\frac{i-1}{n}}^{\frac{i}{n}}\int_{\frac{j-1}{n}}^{\frac{j}{n}}\frac{
\P\left(B_t< a <B_{\frac{i-1}{n}}\right)^{1/p}}{(t-s)^{\beta+1}}\ud s \ud t\\
& := I^{(1)}_{n,3} + I^{(2)}_{n,3}.
\end{split}
\end{equation*}

For the term $I^{(2)}_{n,3}$, by using Lemma \ref{l:estimate}, we obtain
\begin{equation*}
\begin{split}
I^{(2)}_{n,3}&\leq C\sum_{i=2}^n\int_{\frac{i-1}{n}}^{\frac{i}{n}}\int_0^{\frac{i-1}{n}}\frac{\left(t-\frac{i-1}{n}\right)^{\frac{H}{p}}\left(\frac{i-1}{n}\right)^{-\frac{2H}{p}}
}{(t-s)^{\beta+1}}\ud s \ud t\\
&\leq C\sum_{i=2}^n\int_{\frac{i-1}{n}}^{\frac{i}{n}}
\left(t-\frac{i-1}{n}\right)^{\frac{H}{p}}\left(\frac{i-1}{n}\right)^{-\frac{2H}{p}}\left(t-\frac{i-1}{n}\right)^{-\beta}\ud t\\
&= C \left(\frac{1}{n}\right)^{1-\beta-\frac{H}{p}}
\sum_{i=2}^n\left(\frac{1}{i-1}\right)^{\frac{2H}{p}}\\
&\leq C \left(\frac{1}{n}\right)^{\frac{H}{p}-\beta}
\end{split}
\end{equation*}

where for the last inequality, we have used Lemma \ref{lemma_fundamental}. Next we consider the term $I^{(1)}_{n,3}$. In this case, we have to study the case 
$j=1$ separately. Let $j=1$ in the term $I^{(1)}_{n,3}$. Then, by proceeding as for $J_{n,1}$, we have
\begin{equation*}
\begin{split}
&\hspace*{-3cm} \sum_{i=2}^n\int_{\frac{i-1}{n}}^{\frac{i}{n}}\int_0^{\frac{1}{n}}\frac{ \P\left(B_s<a<0\right)^{1/p}}{(t-s)^{\beta+1}}\ud s \ud t\\
& \leq  C\sum_{i=2}^n\int_{\frac{i-1}{n}}^{\frac{i}{n}}\int_0^{\frac{1}{n}}\frac{
1}{(t-s)^{\beta+1}}\ud s \ud t\\
&= C\int_{\frac{1}{n}}^1 \left[t^{-\beta}-\left(t-\frac{1}{n}\right)^{-\beta}\right]\ud t\\
&= C\left[\left(1-\frac{1}{n}\right)^{1-\beta} - 1 + \left(\frac{1}{n}\right)^{1-\beta}\right]\\
&\leq  C\left(\frac{1}{n}\right)^{1-\beta}.
\end{split}
\end{equation*}

If $j>1$, then by changing the order of two summations and tedious manipulation, one gets

\begin{equation*}
 \sum_{i=2}^n\sum_{j=2}^{i-1}\int_{\frac{i-1}{n}}^{\frac{i}{n}}\int_{\frac{j-1}{n}}^{\frac{j}{n}}\frac{
\P\left(B_t< a <B_{\frac{i-1}{n}}\right)^{1/p}}{(t-s)^{\beta+1}}\ud s \ud t \le C \left( \frac{1}{n} \right)^{\frac{H}{p} - \beta}.
\end{equation*}

It remains to estimate the terms $I_{n,1}$ and $I_{n,2}$. For $I_{n,1}$, by using Lemma \ref{l:estimate}, we have
\begin{equation*}
\begin{split}
I_{n,1}&\leq C\int_0^{\frac{1}{n}}\int_0^t \frac{\left(t-s\right)^{\frac{H}{p}}s^{-\frac{2H}{p}}}{(t-s)^{\beta+1}}\ud s \ud t\\
&= C\int_0^{\frac{1}{n}}t^{\frac{H}{p}-\beta-1}\int_0^ts^{-\frac{2H}{p}}\left(1-\frac{s}{t}\right)^{\frac{H}{p}-\beta-1}\ud s\ud t\\
&=C\int_0^{\frac{1}{n}}t^{\frac{H}{p}-\beta}\int_0^1
(tu)^{-\frac{2H}{p}}\left(1-u\right)^{\frac{H}{p}-\beta-1}\ud u\ud t\\
&= C \ B(1-\frac{2H}{p},\frac{H}{p}-\beta )\int_0^{\frac{1}{n}}t^{-\frac{H}{p}-\beta}\ud t\\
&= C \left(\frac{1}{n}\right)^{1-\frac{H}{p}-\beta}
\end{split}
\end{equation*}
where $B(x,y)$ denotes the complete Beta function. For the term $I_{n,2}$, we obtain

\begin{equation*}
\begin{split}
I_{n,2}&\leq C\sum_{i=2}^n\int_{\frac{i-1}{n}}^{\frac{i}{n}}
\int_{\frac{i-1}{n}}^t\frac{\left(t-s\right)^{\frac{H}{p}}s^{-\frac{2H}{p}}}{(t-s)^{\beta+1}}\ud s \ud t\\
&\leq  C\sum_{i=2}^n \left(\frac{i-1}{n}\right)^{-\frac{2H}{p}}
\int_{\frac{i-1}{n}}^{\frac{i}{n}}\left(t-\frac{i-1}{n}\right)^{\frac{H}{p}-\beta}\ud t\\
&=C\left(\frac{1}{n}\right)^{1-\frac{H}{p}-\beta}
\sum_{i=2}^n\left(\frac{1}{i-1}\right)^{\frac{2H}{p}}\\
&\leq  C\left(\frac{1}{n}\right)^{\frac{H}{p}-\beta},
\end{split}
\end{equation*}
where we have used Lemma \ref{lemma_fundamental}. Finally, by collecting estimates for $J_{n,1}$, $J_{n,2}$, 
$I_{n,1}$, $I_{n,2}$ and $I_{n,3}$ 
we obtain that for the convex function $f(x)=(x-a)^+$, there exists a constant $C=C(H,\beta,p)$ such that
\begin{equation}
\label{result_call}
\E|S_n-S|\leq C \ C(a)^{\frac{1}{p}}\left(\frac{1}{n}\right)^{\frac{H}{p}-\beta}.
\end{equation}

\begin{enumerate}
\item[\textbf{Step 2.}]
The case $\text{supp}(\mu)$ is compact.
\end{enumerate}

It is well-known that left derivative of the convex function $f$ has the following representation  
\begin{equation*}
f'_-(x)=\frac{1}{2}\int_\R \text{sgn}(x-a)\mu (\ud a)
\end{equation*}
up to constant. So
\begin{equation*}
\begin{split}
h_n(t) &= \frac{1}{2}\sum_{i=1}^n \left(f'_-(B_{\frac{i-1}{i}}) - f'_-(B_t)\right)\textbf{1}_{\left(\frac{i-1}{n},\frac{i}{n}\right]}(t)\\
&=\frac{1}{2}\sum_{i=1}^n \int_\R\left(\text{sgn}(B_{\frac{i-1}{i}}-a) - \text{sgn}(B_t-a)\right)\mu(\ud a)\textbf{1}_{\left(\frac{i-1}{n},\frac{i}{n}\right]}(t)\\
&=\frac{1}{2}\int_\R \sum_{i=1}^n \Big[ (\textbf{1}_{ \{ B_{\frac{i-1}{i}}>a \} } -\textbf{1}_{ \{ B_t>a \} } ) - (\textbf{1}_{ \{ B_{\frac{i-1}{i}}<a \} } - \textbf{1}_{ \{ B_t<a\} } ) \Big] \textbf{1}_{\left(\frac{i-1}{n},\frac{i}{n}\right]}(t)\mu(\ud a)\\
&= \int_\R h_n^a(t)\mu(\ud a).
\end{split}
\end{equation*}
From this observation, we obtain
\begin{equation}
\label{h_n_gen1}
|h_n(t)|\leq \int_\R|h_n^a(t)|\mu(\ud a)
\end{equation}
and
\begin{equation}
|h_n(t)-h_n(s)|\leq \int_\R|h_n^a(t)-h_n^a(s)|\mu(\ud a).
\end{equation}

Hence, using Tonelli's 
theorem, Minkowski inequality for integrals and inequality $(\ref{h_n_gen1})$ we obtain
\begin{equation*}
\left[\E\left(\int_0^1\frac{|h_n(t)|}{t^{\beta}}\ud t\right)^p\right]^{\frac{1}{p}} 
\leq \int_\R\int_0^1 \frac{\left(\E|h_n^a(t)|^p\right)^{\frac{1}{p}}}{t^\beta}\ud t\mu(\ud a),
\end{equation*}

and
\begin{equation*}
\begin{split}
&\hspace*{-1cm}\left[\E\left(\int_0^1\int_0^t\frac{|h_n(t)-h_n(s)|}{(t-s)^{\beta+1}}\ud s\ud t\right)^p\right]^{\frac{1}{p}}\\
&\leq 
\int_\R\int_0^1\int_0^t \frac{\left(\E|h_n^a(t)-h_n^a(s)|^p\right)^{\frac{1}{p}}}{(t-s)^{\beta+1}}\ud s\ud t\mu(\ud a).
\end{split}
\end{equation*}
Therefore, using step 1 we can conclude that there exists a constant $C = C(H,\beta,p)$ such that
\begin{equation*}
\E|S_n-S| \leq C \int_{\R} C(a)^{\frac{1}{p}}\mu(\ud a) \left(\frac{1}{n}\right)^{\frac{H}{p}-\beta},
\end{equation*}
where $C(a)$ is given by (\ref{constant}).

\begin{enumerate}
\item[\textbf{Step 3.}]
The general case.
\end{enumerate}

Now take any convex function $f$ which satisfies the assumption $\mathbf{(H_{1})}$. For any $k\in\N$, define the measurable set $\Omega_k$ by
\begin{equation}
\Omega_k = \{\omega : \sup_{0\leq t\leq 1}|B_t|\in [0,k]\}
\end{equation}
and auxiliary convex functions $f_k$ by
\begin{equation}
f_k(x)=\begin{cases}
f'_-(-k)x+(f(-k)+f'_-(-k)k),&x<-k\\
f(x),& x\in[-k,k]\\
f'_+(k)x+(f(k)-f'_+(k)k),&x>k.
\end{cases}
\end{equation}
Denote by $\mu_k$ the positive Radon measure associated to the second derivative of convex function $f_k$. Then $\mu_k$ has compact support contained in $[-k,k]$. 
Let $S^k$ and $S_n^k$ stand for the stochastic integral and the uniform discretization as in main theorem corresponds to the convex function $f_k$. Note that on the 
set $\Omega_k$, we have $S_n^k = S_n$ and $S^k = S$ almost surely. Hence, by monotone convergence theorem, we have
\begin{equation*}
\begin{split}
\E|S_n-S| &= \lim_{k\rightarrow\infty}\E|S_n-S|\textbf{1}_{\Omega_k}\\
&= \lim_{k\rightarrow\infty}\E|S_n^k-S^k|\textbf{1}_{\Omega_k}\\
&\leq \lim_{k\rightarrow\infty}\E|S_n^k-S^k|.
\end{split}
\end{equation*}
Applying step 2 and assumption $\mathbf{(H_{1})}$ completes the proof.
\end{proof}

\begin{rmk}
\label{remark_general_r}
The result for $1 < r < p $ follows with the same argument by choosing suitable parameters in H\"older inequality.
\end{rmk}

\begin{proof}[Proof of corollary \ref{main_corollary}]
We prove the result for the function $f(x)=(x-a)^+$ with some positive constant $a$. For negative $a$, we have 
$(X_t - a)^+ = X_t - a$ and the result is trivial. Moreover, the result for general convex function $f$ satisfying the assumption 
$\mathbf{(H_{2})}$ follows by same arguments as in the 
proof of Theorem \ref{main_theorem}. Put
\begin{equation}
\label{h_n_def2}
h_n^{X,a}(t) = \sum_{i=1}^n \left(\textbf{1}_{\{X_{\frac{i-1}{n}}>a\}} - \textbf{1}_{\left\{X_t>a\right\}}\right) X_t\textbf{1}_{\left(\frac{i-1}{n},\frac{i}{n}\right]}(t).
\end{equation}
Then it follows that
$$
\tilde{S}_n -\tilde{S}=\int_0^1 h_n^{X,a}(t)\ud B_t.
$$
A simple calculation gives us
\begin{equation*}
h_n^{X,a}(t) = X_th_n^{\log a}(t)
\end{equation*}
where $h_n^{a}(t)$ is given by \ref{h_n_simple}. Hence for the first term $J_n$, we obtain
\begin{equation*}
\int_0^1 \frac{|h_n^{X,a}(t)|}{t^\beta}\ud t \leq \overline{X} \int_0^1\frac{|h_n^{\log a}(t)|}{t^\beta}\ud t,
\end{equation*}
where $\overline{X}=\sup_{0\leq t\leq 1}X_t$. Moreover, all moments of $\overline{X}$ are finite 
$($see \cite{l}$)$. So we can replace $C(\omega,H,\beta)$
in the inequality (\ref{Nualart_1}) by a new random variable $\tilde{C}(\omega,H,\beta)=C(\omega,H,\beta)\overline{X}$. 
Hence the result follows by Step 1 of the proof of Theorem \ref{main_theorem}.
Next we consider the second term $I_n$. Note that
$$
|h_n^{X,a}(t) - h_n^{X,a}(s)| \leq |h_n^{\log a}(t)|| X_t-X_s|+|X_s|| h_n^{\log a}(t)-h_n^{\log a}(t)|.
$$
For the term $|X_s||h_n^{\log a}(t)-h_n^{\log a}(t)|$, we can proceed as for $J_n$ in the proof of Theorem 
\ref{main_theorem}. For the term $|h_n^{\log a}(t)||X_t-X_s|$, for any $\beta'\in(0,H-\beta)$, using the H\"{o}lder 
continuity property of sample paths of $X_t$, we obtain
\begin{equation*}
\begin{split}
&\hspace{-1.5cm}\int_0^1\int_0^t \frac{|h_n^{\log a}(t)||X_t-X_s|}{(t-s)^{1+\beta}}\ud s \ud t\\
&\leq C(\omega)\int_0^1 |h_n^{\log a}(t)|\int_0^t (t-s)^{H-\beta'-\beta-1}\ud s \ud t\\
&\leq  C(\omega)\int_0^1 |h_n^{\log a}(t)|t^{H-\beta'-\beta} \ud t\\
&\leq  C(\omega)\int_0^1 \frac{|h_n^{\log a}(t)|}{t^\beta}\ud t,
\end{split}
\end{equation*}
where $C(\omega)=C(\omega, H,\beta)$ is a positive random variable for which all moments are finite. 
Hence the result follows from Theorem \ref{main_theorem}.
\end{proof}

\begin{proof}[Proof of theorem~\ref{main-thm-2}]
Since $f$ is a Lipschitz function, there exists an universal constant $L>0$ such that
\begin{equation*}
\vert f(y) - f(x)\vert \le L \vert y-x \vert, \quad \forall x,y \in \R.
\end{equation*}
It is also known that sample paths of fractional Brownian motion are of bounded $p-$variation almost surely for any $p>\frac{1}{H}$. So, let $p,q>\frac{1}{H}$, and using 
Young-Loeve estimate $($see \cite{f-v}$)$, we have for every $0 < \epsilon < H$
\begin{equation*}
\begin{split}
 \vert \hat{S}_n - \hat{S} \vert & \le C \sum_{i=1}^{n} \Vert f(B)\Vert_{q-\text{var}[\frac{i-1}{n},\frac{i}{n}]} \Vert B\Vert_{ p-\text{var}[\frac{i-1}{n},\frac{i}{n}]}\\
& \le C(\omega) \sum_{i=1}^{n} \left( \frac{1}{n}\right)^{2H-\epsilon}\\
& \le C(\omega) \left( \frac{1}{n}\right)^{2H-1-\epsilon}
\end{split}
\end{equation*} 
for some positive random variable $C(\omega)=C(\omega,\epsilon,H,f)$ for which all the moments are finite. Now the 
claim follows.
\end{proof}

\begin{proof}[Proof of theorem \ref{main-thm-bm}]
The result follows by considering the convex function $f(x)=(x-a)^+$ and applying It\'{o} isometry and lemma \ref{l:estimate_bm}.
The general case follows by the same arguments as in the proof of Theorem \ref{main_theorem} together with the assumption $\mathbf{(H_{3})}$. 
The details are left to the reader.
\end{proof}

\appendix
\section{Proofs of lemmas \ref{l:estimate} and \ref{l:estimate_bm}}
We begin with the following simple lemma which we use in the proof.
\begin{lma}
\label{appendix_lemma}
Let $H>\frac{1}{2}$ and fix $0<s\leq t\leq 1$. Put
\begin{equation*}
R(t,s) = \frac{1}{2}\left[t^{2H}+s^{2H} - (t-s)^{2H}\right].
\end{equation*}
Then there exists a constant $C$ such that
\begin{equation*}
1-\frac{R(s,s)}{R(t,s)}\leq C (t-s)^H s^{-H}.
\end{equation*}
\end{lma}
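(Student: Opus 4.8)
The plan is to reduce everything to an elementary estimate for the map $x\mapsto x^{2H}$. First I would note that $R(s,s)=\tfrac12\big[s^{2H}+s^{2H}-0\big]=s^{2H}$, so that
\[
1-\frac{R(s,s)}{R(t,s)}=\frac{R(t,s)-s^{2H}}{R(t,s)}=\frac{\tfrac12\big[t^{2H}-s^{2H}-(t-s)^{2H}\big]}{R(t,s)} .
\]
Since $2H>1$, the function $x\mapsto x^{2H}$ is convex with value $0$ at the origin, hence superadditive on $[0,\infty)$: $(a+b)^{2H}\ge a^{2H}+b^{2H}$. Taking $a=t-s$, $b=s$ gives $t^{2H}\ge (t-s)^{2H}+s^{2H}$, so $R(t,s)\ge s^{2H}=R(s,s)>0$; in particular the numerator above is nonnegative and $0\le 1-R(s,s)/R(t,s)\le 1$.

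Next I would split into two regimes. If $t-s\ge s$, then $(t-s)^Hs^{-H}\ge 1$ and the claimed inequality is immediate (with $C=1$), since the left-hand side never exceeds $1$. If $t-s<s$, I would estimate the numerator by the mean value theorem: writing $t^{2H}-s^{2H}=\int_s^t 2Hx^{2H-1}\,\ud x$ and using $2H-1>0$ together with $t<2s$, one gets $t^{2H}-s^{2H}\le 2H\,t^{2H-1}(t-s)\le 2H\,2^{2H-1}s^{2H-1}(t-s)$, whence $t^{2H}-s^{2H}-(t-s)^{2H}\le 2H\,2^{2H-1}s^{2H-1}(t-s)$. Dividing by $R(t,s)\ge s^{2H}$ yields
\[
1-\frac{R(s,s)}{R(t,s)}\le H\,2^{2H-1}\,\frac{t-s}{s} .
\]
Finally, since $0\le (t-s)/s<1$ and $1-H>0$, we have $(t-s)/s\le\big((t-s)/s\big)^{H}=(t-s)^Hs^{-H}$, which gives the bound in this regime with constant $H\,2^{2H-1}$. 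Taking $C=\max\{1,\,H\,2^{2H-1}\}$ finishes the proof.

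The argument is entirely routine; the only point needing a moment of care is the case distinction. When $t-s$ is comparable to, or larger than, $s$ the right-hand factor $(t-s)^Hs^{-H}$ already dominates $1$, so nothing has to be proved there, whereas in the complementary regime the elementary inequality $(t-s)/s\le\big((t-s)/s\big)^{H}$ is exactly what upgrades the crude bound $\propto (t-s)/s$ to the required $\propto (t-s)^Hs^{-H}$. I do not anticipate any genuine obstacle.
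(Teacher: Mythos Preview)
Your proof is correct. Both you and the paper split into the same two regimes, $t-s\ge s$ versus $t-s<s$, and dispose of the first regime identically by noting that the right-hand side already dominates~$1$. In the second regime the arguments diverge: the paper substitutes $k=t/s\in[1,2]$, rewrites the inequality as $k^{2H}-1-(k-1)^{2H}\le C(k-1)^{H}\bigl[k^{2H}+1-(k-1)^{2H}\bigr]^{2}$, and then bounds the resulting ratio via $(k-1)^{H}\ge k^{H}-1$ and the factorisation $k^{2H}-1=(k^{H}-1)(k^{H}+1)$. Your route is more direct: you bound the numerator $t^{2H}-s^{2H}$ by the mean value theorem, use the crude lower bound $R(t,s)\ge s^{2H}$ for the denominator, and finish with the elementary inequality $x\le x^{H}$ for $x\in[0,1)$. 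The two approaches yield comparable constants, but yours avoids the change of variables and the algebraic detour, so it is the cleaner argument.
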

\begin{proof}
Note that since $H>\frac{1}{2}$, we have $R(s,s)\leq R(t,s)$. Let now $t>2s$. Then
\begin{equation*}
\frac{(t-s)^H}{s^H}\geq 1 -\frac{R(s,s)}{R(t,s)}.
\end{equation*}
Hence it is sufficient to consider the case $s\leq t \leq 2s$. In this case we have
\begin{equation*}
\frac{R(t,s)}{R(s,s)}\leq \frac{R(2s,s)}{R(s,s)}=2^{2H-1}.
\end{equation*}
Hence we only have to prove that 
\begin{equation*}
1-\frac{R(s,s)}{R(t,s)}\leq C \frac{(t-s)^H}{s^{-H}}\frac{R(t,s)}{R(s,s)}.
\end{equation*}
By putting $k=\frac{t}{s}$ and dividing with $s^{5H}$, this is equivalent to
\begin{equation*}
\left[k^{2H}-1-(k-1)^{2H}\right]\leq C(k-1)^H\left[k^{2H} + 1 - (k-1)^{2H}\right]^2.
\end{equation*}
Now we have $k\in[1,2]$. Hence
\begin{equation*}
\begin{split}
&\hspace*{-3cm}\frac{\left[k^{2H}-1-(k-1)^{2H}\right]}{(k-1)^H\left[k^{2H} + 1 - (k-1)^{2H}\right]^2}\\
&\leq \frac{k^{2H}-1}{(k-1)^H}\leq \frac{k^{2H}-1}{k^H-1}\\
&= k^H+1 \leq 2^H+1.
\end{split}
\end{equation*}
This completes the proof.
\end{proof}
\begin{proof}[Proof of lemma \ref{l:estimate}]
Let $R(t,s)$ denotes the covariance function of fractional Brownian motion given by
\begin{equation*}
R(t,s) = \frac{1}{2}\left[t^{2H}+s^{2H} - (t-s)^{2H}\right].
\end{equation*}
We make use of decomposition
\begin{equation*}
B_t = \frac{R(t,s)}{R(s,s)}B_s + \sigma Y,
\end{equation*}
where $Y$ is $N(0,1)$ random variable independent of $B_s$ and 
\begin{equation*}
\sigma^2 = \frac{R(t,t)R(s,s)-R(t,s)^2}{R(s,s)}.
\end{equation*}
Assume that 
$$
\frac{R(s,s)}{R(t,s)}(a-1) < a.
$$ 
Then we obtain 
\begin{equation*}
\begin{split}
\P(B_t > a > B_s) &= \int_{-\infty}^a \P\left(Y\geq \frac{a-\frac{R(t,s)}{R(s,s)}x}{\sigma}\right)\frac{1}{\sqrt{2\pi}s^H}e^{-\frac{x^2}{2s^{2H}}}\ud x\\
&= \int_{-\infty}^{\frac{R(s,s)}{R(t,s)}(a-1)} \P\left(Y\geq \frac{a-\frac{R(t,s)}{R(s,s)}x}{\sigma}\right)\frac{1}{\sqrt{2\pi}s^H}e^{-\frac{x^2}{2s^{2H}}}\ud x\\
&+ \int_{\frac{R(s,s)}{R(t,s)}(a-1)}^{a} \P\left(Y\geq \frac{a-\frac{R(t,s)}{R(s,s)}x}{\sigma}\right)\frac{1}{\sqrt{2\pi}s^H}e^{-\frac{x^2}{2s^{2H}}}\ud x\\
&:= I_1 + I_2.
\end{split}
\end{equation*}
We begin with $I_1$. 
By lemma \ref{standard_estimate} we have
$$
\P\left(Y\geq \frac{a-\frac{R(t,s)}{R(s,s)}x}{\sigma}\right) \leq \frac{1}{\sqrt{2\pi}A(x)}e^{-\frac{A(x)^2}{2}},
$$
where $A(x) = \frac{a-\frac{R(t,s)}{R(s,s)}x}{\sigma}$. Hence
\begin{equation*}
\begin{split}
I_1 &\leq \int_{-\infty}^{\frac{R(s,s)}{R(t,s)}(a-1)} \frac{1}{\sqrt{2\pi}A(x)}e^{-\frac{A(x)^2}{2}}\frac{1}{\sqrt{2\pi}s^H}e^{-\frac{x^2}{2s^{2H}}}\ud x\\
&\leq \frac{\sigma}{s^H}e^{-\frac{a^2}{2}}\int_{-\infty}^{\frac{R(s,s)}{R(t,s)}(a-1)} \frac{1}{2\pi}e^{-\frac{A(x)^2}{2}-\frac{x^2}{2s^{2H}} + \frac{a^2}{2}}\ud x\\
&=\frac{R(s,s)}{R(t,s)}\frac{\sigma}{s^H}e^{-\frac{a^2}{2}}\int_{1}^{\infty} \frac{1}{2\pi}e^{-\frac{y^2}{2\sigma^2}-\frac{\left[\frac{R(s,s)}{R(t,s)}(a-y)\right]^2}{2s^{2H}} + \frac{a^2}{2}}\ud y
\end{split}
\end{equation*}
Note that $\sigma\leq (t-s)^H$ and $R(s,s)\leq R(t,s)$, so it remains to show that the integral is bounded by a constant independent of $s$, $t$ and $a$. It is easy 
to see that 
\begin{equation*}
\begin{split}
&-\frac{y^2}{2\sigma^2}-\frac{\left[\frac{R(s,s)}{R(t,s)}(a-y)\right]^2}{2s^{2H}} + \frac{a^2}{2} \\
&= -\frac{1}{2\sigma^2}\left[\left(y-a\frac{R(s,s)}{R(t,s)^2}\bar{\sigma}^2\right)^2 + a^2\left(\frac{R(s,s)}{R(t,s)^2}\bar{\sigma}^2 - \bar{\sigma}^2 - \frac{R(s,s)^2}{R(t,s)^4}\bar{\sigma}^4\right)\right],
\end{split}
\end{equation*}
where 
\begin{equation*}
\frac{1}{\bar{\sigma}^2}=\frac{1}{\sigma^2} + \frac{R(s,s)}{R(t,s)^2}.
\end{equation*}
Now 
\begin{equation*}
\frac{1}{\bar{\sigma}^2}\geq 1
\end{equation*}
and 
\begin{equation*}
\left(\frac{R(s,s)}{R(t,s)^2}\bar{\sigma}^2 - \bar{\sigma}^2 - \frac{R(s,s)^2}{R(t,s)^4}\bar{\sigma}^4\right) \geq 0.
\end{equation*}
Hence
\begin{equation*}
\begin{split}
&\int_{1}^{\infty} \frac{1}{2\pi}e^{-\frac{y^2}{2\sigma^2}-\frac{\left[\frac{R(s,s)}{R(t,s)}(a-y)\right]^2}{2s^{2H}} + \frac{a^2}{2}}\ud y\\
&\leq \int_{1}^{\infty} \frac{1}{2\pi}e^{-\frac{1}{2\sigma^2}\left(y-a\frac{R(s,s)}{R(t,s)^2}\bar{\sigma}^2\right)^2}\ud y\\
&\leq \frac{1}{\sqrt{2\pi}}.
\end{split}
\end{equation*}
Hence for $I_1$, there exists a constant $C$ such that 
\begin{equation*}
I_1 \leq C e^{-\frac{a^2}{2}} \ \frac{(t-s)^H}{s^H}.
\end{equation*}
We proceed to study the term $I_2$. Note that $\sigma^2\geq 0$. Hence
\begin{equation*}
\frac{R(s,s)}{R(t,s)^2}\geq \frac{1}{R(t,t)}\geq 1.
\end{equation*}
As a consequence, there exists a constant\footnote{We have $C=1$ except when $a\in[0,1]$. In this case $C=e^{\frac{1}{2}}$.} $C$ such that
\begin{equation*}
e^{-\frac{x^2}{2s^{2H}}}
\leq Ce^{-\frac{\min \{ a^2,(a-1)^2 \} }{2}}
\end{equation*}
for every $a$ and every $x\in\left[\frac{R(s,s)}{R(t,s)}(a-1),a\right]$.
Hence
\begin{equation*}
\begin{split}
I_2 &= \int_{\frac{R(s,s)}{R(t,s)}(a-1)}^{a} \int_{A(x)}^{\infty}\frac{1}{\sqrt{2\pi}}e^{-\frac{y^2}{2}}\ud y
\frac{1}{\sqrt{2\pi}s^H}e^{-\frac{x^2}{2s^{2H}}}\ud x\\
&\leq \frac{1}{\sqrt{2\pi}s^H}e^{-\frac{\min \{ a^2,(a-1)^2 \} }{2}}\int_{\frac{R(s,s)}{R(t,s)}(a-1)}^{a} \int_{A(x)}^{\infty}\frac{1}{\sqrt{2\pi}}e^{-\frac{y^2}{2}}
\frac{1}{\sqrt{2\pi}}\ud y\ud x.
\end{split}
\end{equation*}
By applying Tonelli's theorem, the integral can be written as
\begin{equation*}
\begin{split}
&\int_{\frac{R(s,s)}{R(t,s)}(a-1)}^{a} \int_{A(x)}^{\infty}\frac{1}{\sqrt{2\pi}}e^{-\frac{y^2}{2}}
\ud y\ud x\\
&= \int_{\left[1-\frac{R(t,s)}{R(s,s)}\right]\frac{a}{\sigma}}^{\frac{1}{\sigma}}\frac{1}{\sqrt{2\pi}}e^{-\frac{y^2}{2}}\left[a-\frac{R(s,s)}{R(t,s)}(a-\sigma y)\right]\ud y\\
&+ \int_{\frac{1}{\sigma}}^{\infty}\frac{1}{\sqrt{2\pi}}e^{-\frac{y^2}{2}}\left[a-\frac{R(s,s)}{R(t,s)}(a-1)\right]\ud y\\
&=: I_{2,1}+I_{2,2}.
\end{split}
\end{equation*}
For $I_{2,2}$, by lemma \ref{standard_estimate}, we obtain
\begin{equation*}
I_{2,2} \leq C\max(1,|a|)\sigma \leq 2\max(1,|a|)(t-s)^H.
\end{equation*}
For $I_{2,1}$, by applying lemma \ref{appendix_lemma}, we obtain 
\begin{equation*}
I_{2,1} \leq C \max(1,|a|) \frac{(t-s)^H}{s^H}.
\end{equation*}
Note that if 
\begin{equation*}
\frac{R(s,s)}{R(t,s)}(a-1) > a,
\end{equation*}
then we proceed as for $I_1$ and obtain the result. This completes the proof.
\end{proof}

\begin{proof}[Proof of lemma \ref{l:estimate_bm}]
The result follows by the same arguments as in the proof of lemma \ref{l:estimate} together with the fact that for standard Brownian motion we have $R(s,s)=R(t,s)$ for $s\le t$.
\end{proof}

\vskip0.5cm

\textbf{Acknowledgements}.\\
The authors thank Esko Valkeila for discussions and comments which improved the paper. Ehsan Azmoodeh thanks the Magnus Ehrnrooth foundation for financial support. 
Lauri Viitasaari thanks the Finnish Doctoral Programme in Stochastics and Statistics for financial support.

\end{document}